\newenvironment{Proof}{{\it Proof.}}{{$\square$} \vskip.5cm} \newtheorem{Thm}{Theorem}[section]
\newtheorem{Lem}[Thm]{Lemma} \newtheorem{Pro}[Thm]{Proposition} \newtheorem{Cor}[Thm]{Corollary}
\theoremstyle{definition}
\theoremstyle{remark}
\newcommand{\R}{\mathbb{R}} \newcommand{\Z}{\mathbb{Z}}  
\newcommand{\Hyp}{\mathbb{H}}  \newcommand{\Ends}{{\rm{Ends}}}
\newcommand{\Stab}{{\rm{Stab}}}
\title{Poisson--Furstenberg boundaries \\ of fundamental groups of closed
3-manifolds}
\author{A. Malyutin\footnote{The first author is grateful to V.~Kaimanovich for useful discussions. The research is supported by RFBR grants 14-01-00373-a and 13-01-12422-ofi-m.}, P. Svetlov\footnote{Supported by RFBR grant 14-01-00062.}}
\begin{document}
\maketitle
\begin{abstract}
We obtain a description of Poisson--Furstenberg boundaries for (random walks on)
fundamental groups of compact graph-manifolds.
Together with previously known results due to V.A.~Kaimanovich and others, this
allows one to obtain descriptions of Poisson--Furstenberg boundaries for fundamental
groups of all closed 3-manifolds.
\end{abstract}

\section{Introduction}

This paper concerns the Poisson--Fur\-sten\-berg boundaries of (random walks on)
fundamental groups of 3-manifolds. If a closed (i.e., compact without boundary)
3-manifold is not a graph-manifold, then its fundamental group belongs to those
classes of groups whose Poisson--Furstenberg boundaries, in the case of reasonable measures, have already been described
in literature (see Sec.~3 for details).

Below, we present a description of Poisson--Furstenberg boundaries for the case of graph-manifolds.
Specifically, we show that the Poisson--Fur\-sten\-berg boundary of the fundamental group $\pi_1(M)$ of a compact graph-manifold $M$ can be identified, in the case of measure with finite entropy and finite first logarithmic moment, with the space of ends of Bass--Serre covering tree for the graph of groups associated with~$M$ via its geometric decomposition.

The key ingredient allowing us to obtain this result is the following theorem due to
the first author.

\begin{Thm}\label{Th0}
Let a countable group $G$ act on an infinite (simplicial) tree~$\Gamma$. Let $\mu$ be a probability measure on~$G$ whose support generates $G$ as a semigroup. Assume that $\Gamma$ has no proper $G$-invariant subtrees while the induced action of~$G$ on the set of ends $\Ends(\Gamma)$ has no finite orbits. 
Then the $G$-space $\Ends(\Gamma)$ has a unique $\mu$-stationary Borel probability measure $\nu_\mu$ and the pair $(\Ends(\Gamma),\nu_\mu)$ is a $\mu$-boundary of~$(G,\mu)$.

Assume moreover that $G$ is finitely generated, $\mu$ has finite entropy $-\sum_{G}\mu(g)\log\mu(g)$ and finite first logarithmic moment $\sum_G\log|g|\mu(g)$ with respect to a finite word norm $|\cdot|$ on~$G$. Assume also that there exist two vertices $u$, $v$ in $\Gamma$ such that the subgroup 
$$\Stab(u)\cap\Stab(v)$$
of their common stabilizers in $G$ is finite. Then the $\mu$-boundary $(\Ends(\Gamma),\nu_\mu)$ is isomorphic to the Poisson--Furstenberg boundary of $(G,\mu)$.
\end{Thm}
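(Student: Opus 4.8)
The plan is to invoke Kaimanovich's strip criterion, which identifies a $\mu$-boundary $(B_+,\nu_+)$ with the full Poisson--Furstenberg boundary provided one can exhibit, together with a $\check\mu$-boundary $(B_-,\nu_-)$, a measurable $G$-equivariant assignment of nonempty \emph{strips} $S(b_-,b_+)\subseteq G$ to $\nu_-\otimes\nu_+$-almost every pair, whose word growth is controlled by $\tfrac1n\log|S(b_-,b_+)\cap B(e,|x_n|)|\to0$ along the random walk $(x_n)$, where $B(e,r)$ is the $r$-ball for the word norm $|\cdot|$. Both boundaries will be realized on $\Ends(\Gamma)$: the already-proved first part of the theorem applies verbatim to $\check\mu$, since $\mathrm{supp}(\check\mu)=(\mathrm{supp}\,\mu)^{-1}$ again generates $G$ as a semigroup and the hypotheses on the $G$-action are properties of the group, not of $\mu$. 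This provides stationary measures $\nu_\mu$ and $\nu_{\check\mu}$, both with support the unique minimal (limit) set $L\subseteq\Ends(\Gamma)$.

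First I would build the strips from the tree geometry. Write $[u,v]$ for the geodesic segment between the distinguished vertices, and for distinct ends $\eta,\xi$ let $\gamma(\eta,\xi)$ be the bi-infinite geodesic joining them. Set
\[ S(\eta,\xi)=\{\,g\in G:\ gu,\ gv\in\gamma(\eta,\xi)\,\}. \]
Equivariance is immediate from $g\cdot\gamma(\eta,\xi)=\gamma(g\eta,g\xi)$, and measurability is routine. The growth estimate is exactly where the finiteness hypothesis enters: if $g\in S(\eta,\xi)$ then the translate $[gu,gv]$ of $[u,v]$, of fixed length $d(u,v)$, lies on the line $\gamma(\eta,\xi)$; a line meets $B(o,R)$ (for a base vertex $o$) in at most $2R{+}1$ vertices, hence carries $O(R)$ such translate-segments, and for each admissible target segment the set of $g$ realizing it is a coset of $\Stab(u)\cap\Stab(v)$, so of cardinality at most $|\Stab(u)\cap\Stab(v)|<\infty$. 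Comparing $d(o,go)$ with $|g|$ through the Lipschitz orbit map gives $|S(\eta,\xi)\cap B(e,R)|=O(R)$, i.e.\ at most linear growth.

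To close the growth condition I would use the finite first logarithmic moment: writing $x_n=s_1\cdots s_n$, subadditivity gives $|x_n|\le\sum_{k\le n}|s_k|$, and $\sum_G\log|g|\,\mu(g)<\infty$ forces $\tfrac1n\log|x_n|\to0$ almost surely (since $\mathbb{E}\log|s_1|<\infty$, the largest of the first $n$ increments has word length $e^{o(n)}$). Combined with the linear bound this yields $\tfrac1n\log|S(\eta,\xi)\cap B(e,|x_n|)|\le\tfrac1n\bigl(\log C+\log|x_n|\bigr)\to0$, so the criterion applies as soon as the strips are almost surely nonempty; the finiteness of the entropy $-\sum_G\mu(g)\log\mu(g)$ is precisely what the criterion needs for its underlying entropy comparison.

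The main obstacle is the almost-sure nonemptiness of the strips, i.e.\ that for $\nu_{\check\mu}\otimes\nu_\mu$-a.e.\ pair $(\eta,\xi)$ the geodesic $\gamma(\eta,\xi)$ contains a $G$-translate of $[u,v]$. Here I would argue as follows. Since $\Gamma$ has no proper $G$-invariant subtree, its minimal invariant subtree is $\Gamma$ itself, so $[u,v]$ lies on some geodesic $\gamma(\eta_0,\xi_0)$ with $\eta_0,\xi_0\in L$. Deleting the open segment $(u,v)$ separates off two clopen sets of ends $U_u,U_v$ (those lying ``beyond'' $u$, resp.\ $v$); each meets $L=\mathrm{supp}\,\nu_{\check\mu}=\mathrm{supp}\,\nu_\mu$, hence has positive measure, and every pair in $U_u\times U_v$ has its geodesic through $[u,v]$. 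Thus the $G$-invariant set $W$ of good pairs has positive measure, and the remaining task is to promote this to full measure. This is the delicate step: it follows from ergodicity of the diagonal $G$-action on $(L\times L,\nu_{\check\mu}\otimes\nu_\mu)$, which I expect to require the most care and which rests on the non-elementarity encoded in the ``no finite orbit on $\Ends(\Gamma)$'' hypothesis (double ergodicity of boundary actions). Granting it, $W$ is conull, the strips are a.e.\ nonempty, and the strip criterion identifies $(\Ends(\Gamma),\nu_\mu)$ with the Poisson--Furstenberg boundary of $(G,\mu)$.
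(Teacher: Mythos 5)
A caveat about the comparison first: the paper does not actually prove Theorem~\ref{Th0}. It defers the proof to \cite{Mal14} and only names the methods --- Furstenberg's compactness argument applied to a compact (shadow/observers') topology on $\Gamma\cup\Ends(\Gamma)$ for the first assertion, and Kaimanovich's strip criterion for the second. Your treatment of the second assertion follows exactly the announced route, and its core is sound: the strips $S(\eta,\xi)=\{g\in G:\ gu,gv\in\gamma(\eta,\xi)\}$ are equivariant; the fibre of the map $g\mapsto(gu,gv)$ over an admissible pair is a coset of the finite group $\Stab(u)\cap\Stab(v)$, so a bi-infinite line carries only $O(R)$ strip elements in a ball of radius $R$; and the finite first logarithmic moment gives $\tfrac1n\log|x_n|\to0$, which together with linear strip growth verifies the growth condition of the criterion. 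This is the standard way the strip criterion is applied to tree actions.

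There are, however, two genuine gaps. The larger one: you label the first assertion ``already proved'' and build on it, but it is part of the statement and is nowhere established in your argument. It is also precisely the part the paper flags as nontrivial: for a countable group the tree is typically not locally finite, so $\Ends(\Gamma)$ need not be compact, and Furstenberg's existence/uniqueness argument for stationary measures and the a.s.\ convergence of the walk to a random end cannot be invoked off the shelf --- one needs the compactification of $\Gamma\cup\Ends(\Gamma)$ and an argument that the limiting measure charges $\Ends(\Gamma)$ rather than $\Gamma$. Without this you have neither $\nu_\mu$ nor the boundary maps that the strip criterion takes as input. The smaller gap is the a.e.\ nonemptiness of the strips: you correctly reduce it to ergodicity of the diagonal $G$-action on $\bigl(\Ends(\Gamma)\times\Ends(\Gamma),\,\nu_{\check\mu}\otimes\nu_\mu\bigr)$ and to positivity of the measure of the set of ``good'' pairs, but you leave the ergodicity as a hope. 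In fact it is a standard ingredient of the strip method and should simply be cited: the product of a $\check\mu$-boundary and a $\mu$-boundary with the product measure is an ergodic $G$-space (it is a quotient of the Poisson boundary of the bilateral walk), so your invariant positive-measure set $W$ is conull. With that reference supplied and with a proof (or citation) of assertion~1, your sketch would close; as written it establishes the identification only conditionally on the first assertion.
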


The proof of this theorem (in a stronger form) will be presented in~\cite{Mal14}.
The proof of the first assertion of the theorem is based on the approach of H.~Furstenberg~\cite{Fur63, Fur71}.
This approach essentially uses compactness of the $G$-space, while the space of ends $\Ends(\Gamma)$ of not locally finite tree may be non-compact. 
In order to apply the approach of Furstenberg to $\Ends(\Gamma)$, we consider a compact topology on $\Gamma\cup\Ends(\Gamma)$. This topology was introduced, under various names for various tree-like spaces, in
\cite{Wa75,Wa80,Ni89,Bo99,Sw99,MSh04,FJ04,CHL07} (see~\cite{Mal12} for details).
It can also be described in terms of horofunctions~\cite{Gr81} or as a natural quotient of the Cartwright--Soardi--Woess compactification~\cite{CSW93}.
For the particular case of locally finite trees (and compact $\Ends(\Gamma)$), the first assertion of Theorem~\ref{Th0} was established (via the same methods of~\cite{Fur63, Fur71}) by D.I.~Cartwright and P.M.~Soardi~\cite{CS89}. The related fact of convergence to infinite words for random walks on free groups with non-finitary measures was mentioned earlier in~\cite[Sec.~6.8]{KV83} without proof attributed to G.\,A.~Margulis.
The proof of the second assertion of Theorem~\ref{Th0} uses the ``strip criterion'' due to V.A.~Kaimanovich~\cite{Ka00-hyp}.

We will not give definitions related to random walk and boundary theory. This definitions can be found, e.g., in~\cite{KV83,Ka00-hyp}.

In Section~2 below, we give definitions related to graph-manifolds and study a natural action of the fundamental group of a graph-manifold on the corresponding Bass--Serre covering tree.  We show that this action satisfies the prerequisites of Theorem~\ref{Th0}, which yields  a description of Poisson--Fur\-sten\-berg boundaries for fundamental groups of all compact graph-manifolds.

In Section~3, we list results that provide descriptions of Poisson--Fur\-sten\-berg boundaries for fundamental groups of closed 3-manifolds that are not graph-manifolds.

\section{The case of graph-manifolds}  
We briefly recall basic definitions concerning graph-manifolds (see \cite{AFW,BS}  for more details and related references).

A (closed connected irreducible) 3-manifold $M$ is called a \emph{graph-manifold} if $M$ is not a Seifert fibered space and there exists a finite collection $\mathcal{T}\subset M$  of disjointly embedded incompressible tori such that each connected component of $M\setminus\mathcal{T}$ is a Seifert fibered space. Any such collection of tori with minimal number of components is unique up to isotopy and said to be a \emph{JSJ-surface}. 

Let $\mathfrak{M}_0$ be the class  formed by all orientable graph-manifolds  with the following property: 
for each $M\in\mathfrak{M}_0$ with JSJ-surface $\mathcal{T}\subset M$ all connected components of the complement $M\setminus\mathcal{T}$ are interiors of  trivial $S^1$-fibrations over orientable surfaces  of
negative Euler characteristic.

Let $\mathfrak{M}_s$ be the class formed by all graph-manifolds  of the class $\mathfrak{M}_0$ with the following additional property:
each torus $T\subset\mathcal{T}$  belongs to  closures of two different connected components of $M\setminus\mathcal{T}$. 

Note that each graph-manifold admits a finite-sheeted
covering by a manifold of the class $\mathfrak{M}_0$ \cite[Prop.~14.8]{BK96}, while each manifold of the class $\mathfrak{M}_0$  has a 2-folded covering manifold of the class $\mathfrak{M}_s$.

\emph{Blocks} of a manifold $M\in\mathfrak{M}_s$ are closures in $M$ of connected components of $M\setminus\mathcal{T}$. So each block is a submanifold of $M$ with toric boundary.

\paragraph{Graphs.} Let $M$ be a manifold of the class $\mathfrak{M}_s$, $\mathcal{T}$ be its JSJ-surface, $E$ be a set indexing the tori of $\mathcal{T}=\bigcup_{e\in E} T_e$, and $V$ be a set indexing the blocks of the
JSJ-decomposition: $M=\bigcup_{v\in V}M_v$. Consider the graph $\Gamma_M(V,E)$ with vertices and edges also indexed by the sets $V$ and $E$ such that  two vertices $v$ and $v'$ of $\Gamma_M$ are connected by an edge $e\in E$
if $T_e\subset M_v\cap M_{v'}$ (multiple edges but no loops are possible). Note that each finite-sheeted covering space of a manifold 
$M\in\mathfrak{M}_s$ also belongs to $\mathfrak{M}_s$.

Any covering space $N$ of $M$ also has a graph-structure. 
Let $p:N\to M$ be the covering map. \emph{Blocks} of the manifold $N$ are closures in $N$ of connected components of $N\setminus p^{-1}(\mathcal{T})$. So each block is a submanifold of $N$.

Similarly to the previous case we fix a set $U$ indexing the blocks of~$N$:
$$
N=\bigcup_{v\in V} p^{-1}(M_v)=\bigcup_{u\in U}N_u,
$$ 
and a set $F$ indexing the connected components of $p^{-1}(\mathcal{T})$:
$$p^{-1}(\mathcal{T})=\bigcup_{f\in F}P_f.$$ 
The graph $\Gamma_N(U,F)$ is constructed exactly as above:
two vertices $u,u'\in U$ are connected by the edge 
$f\in F$ if  $P_f\subset N_u\cap N_{u'}$.

We have a natural map $$p_*\colon\Gamma_N(U,F)\to\Gamma_M(V,E)$$ 
defined by $$M_{p_*(u)}=p\,(N_u),\qquad T_{p_*(f)}=p\,(P_f).$$

\paragraph{Action.} It is well known that the fundamental group of a manifold $M$ is isomorphic to the deck transformation\footnote{A homeomorphism $g\colon N\to N$ is a deck transformation of a covering $p\colon N\to M$ if $p\circ g=p$.} group $D_N$
of its universal covering space~$N$. 
If $M$ is a graph-manifold of the class $\mathfrak{M}_s$ and $p\colon N\to M$ is the universal covering map, then any element $g\in D_N$ permutes the blocks of $p^{-1}(M_v)\subset N$ for each $v\in V$ and we have a natural action of $D_N$ on~$\Gamma_N$.
 
Note that each element $g\in D_N$ defines  a class $L_g$ of freely homotopic loops 
 in $M$: a loop $\gamma$ belongs to $L_g$ if there exists a path  
$\tau\colon I\to N$ {such that} $\tau(1)=g(\tau(0))$ and $\gamma=p\circ\tau$. There exists
a natural one-to-one correspondence between the classes of conjugate elements in $D_N$
and the classes of freely homotopic loops in $M$ (if $\gamma\in L_g$, then we say that  $g$ and $\gamma$ 
and corresponding). 
 
We will repeatedly use the following lemma.

\begin{Lem}\label{s}  
Let $K$ be a connected $\pi_1$-injectively embedded submanifold of a manifold $M$.

\begin{itemize}
 \item[1.] 
If $p\colon\widetilde{M}\to M$ is the universal covering and $K'$ is a connected component of $p^{-1}(K)$, then the restriction $p\,|_{K'}\colon K'\to K$ is a universal
covering. 
Moreover, the restrictions onto $K'$ of transformations from the set $$D'=\{g\in D_{\widetilde{M}}\colon\,g(K')=K'\}$$
form the deck transformation group $D_{K'}$ of $p\,|_{K'}$.

\item[2.] For each element $g\in D_{\widetilde{M}}$ the following two conditions are equivalent.
\begin{itemize}
 \item[a)] There exists a connected component
 $K'\subset p^{-1}(K)$ \\such that {${g(K')=K'}$}.
 \item[b)] There exists a loop $\gamma\in L_g$ contained in $K$.
\end{itemize}
\end{itemize}
\end{Lem}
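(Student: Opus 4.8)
The plan is to reduce both parts to two standard facts of covering space theory: that the restriction of a covering map over a subspace is again a covering, and that connected coverings of a (locally nice, e.g.\ manifold) space $K$ are classified by subgroups of $\pi_1(K)$. Throughout I would fix a basepoint $x_0\in K'$, set $y_0=p(x_0)\in K$, and write $\iota\colon K\hookrightarrow M$ for the inclusion, so that the $\pi_1$-injectivity hypothesis reads: $\iota_*\colon\pi_1(K,y_0)\to\pi_1(M,y_0)$ is injective.

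For Part~1 I would first observe that restricting $p$ over the subspace $K$ makes $p^{-1}(K)\to K$ a covering, so that its connected component $K'$ gives a connected covering $p|_{K'}\colon K'\to K$. To see it is \emph{universal}, I identify the subgroup $H\le\pi_1(K,y_0)$ classifying it: a loop $\delta$ at $y_0$ lifts to a loop in $K'$ exactly when its lift into $\widetilde{M}$ starting at $x_0$ closes up, and since $\widetilde{M}$ is simply connected this happens precisely when $\iota\circ\delta$ is null-homotopic in $M$, i.e.\ when $\iota_*[\delta]=1$; hence $H=\ker\iota_*$, which is trivial by hypothesis. For the deck groups, every $g\in D'$ commutes with $p$, so $g|_{K'}$ lies in $D_{K'}$ and restriction is a homomorphism $D'\to D_{K'}$. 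Injectivity is immediate because a nontrivial deck transformation of the universal cover $\widetilde{M}$ acts freely and cannot fix the nonempty $K'$ pointwise. For surjectivity, given $h\in D_{K'}$ corresponding to $[\delta]\in\pi_1(K,y_0)$, I take $g\in D_{\widetilde{M}}$ corresponding to $\iota_*[\delta]$; uniqueness of path lifting forces the lift of $\iota\circ\delta$ into $\widetilde{M}$ to coincide with the lift of $\delta$ into $K'$, so $g(x_0)=h(x_0)\in K'$. Then $g$ sends the component $K'$ to a component meeting it, whence $g(K')=K'$ and $g\in D'$, and $g|_{K'}$ and $h$ are deck transformations of $K'$ agreeing at $x_0$, hence equal.

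For Part~2 the argument is a short piece of path-tracking. For (a)$\Rightarrow$(b), if $g(K')=K'$ I choose a path $\tau$ in the connected set $K'$ from $x_0$ to $g(x_0)$; then $\tau(1)=g(\tau(0))$ and $\gamma=p\circ\tau$ is a loop, since $p\circ g=p$, lying in $p(K')=K$, so $\gamma\in L_g$ is contained in $K$. For (b)$\Rightarrow$(a), given $\gamma\in L_g$ contained in $K$ with witnessing path $\tau$, the image $\tau(I)$ is a connected subset of $p^{-1}(K)$ and therefore lies in a single component $K'$; since $g(\tau(0))=\tau(1)\in K'$, the component $g(K')$ meets $K'$ and so equals it.

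I expect the only genuine subtlety to be the surjectivity step in Part~1, where one must promote a deck transformation of the slice $K'$ to a globally defined deck transformation of $\widetilde{M}$ preserving $K'$. This is exactly the point where $\pi_1$-injectivity of the embedding is indispensable: it is what guarantees that $\pi_1(K)$ embeds in $\pi_1(M)\cong D_{\widetilde{M}}$ as precisely the stabilizer $D'$ of $K'$. Everything else is routine bookkeeping with uniqueness of lifts and the freeness of the deck action on the universal cover.
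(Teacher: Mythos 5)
Your proposal is correct and follows essentially the same route as the paper: both parts hinge on the observation that a loop in $K'$ lifts a loop of $K$ that dies in $\pi_1(M)$ (since $\widetilde{M}$ is simply connected) and hence, by $\pi_1$-injectivity, is trivial in $\pi_1(K)$, plus the same path-tracking for Part~2. Your only real addition is that you spell out the surjectivity of the restriction map $D'\to D_{K'}$, which the paper dispatches with the single phrase that the equality ``immediately follows from the $\pi_1$-injectivity''; your lifting argument is a correct fleshing-out of that step.
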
 

\begin{Proof} 1. We have the following commutative diagram
$$
\begin{array}{ccccc}
&K'&\stackrel{i}{\longrightarrow} &\widetilde{M}&\\
{{p|_{K'}}}&\downarrow&&\downarrow&{p,}\\
&K&\stackrel{j}{\longrightarrow}&M&
\end{array}
$$
where $i$ and $j$ are inclusions. The induced map 
$$(j\circ p|_{K'})_*\colon\pi_1(K')\to \pi_1(M)$$ 
is injective. 
The other composition 
$$(p\circ i)_*\colon \pi_1(K')\to\pi_1(M)$$ 
is the trivial homomorphism due to the simply-connectedness of $\widetilde{M}$. 
Therefore $\pi_1(K')$ is trivial and $K'$ is simply-connected. Hence $p\,|_{K'}\colon K' \to K$ is a universal covering.

If $g\in D'$ then $p\,|_{K'}\circ g\,|_{K'}=p\,|_{K'}$, therefore $D'$ is a subgroup of $D_{K'}$. The equality
immediantly follows from the $\pi_1$-injectivity.

2. [a)$\Rightarrow$b)] Let $x\in K'$ be a point.  Since $K'$ is path-connected, there is a curve $\tau\colon I\to K'$  such that $\tau(0)=x$, $\tau(1)=g(x)$.
The loop $p\circ\tau\in L_g$ is contained in $K$.

 [b)$\Rightarrow$a)] Let $\tau\colon I\to \widetilde{M}$ be a path such that $g(\tau(0))=\tau(1)$ and the image of $p\circ\tau$
is contained in $K$. The points $\tau(0),\tau(1)$ belong to one and the same connected component $K'$ of $p^{-1}(K)$.
Each deck transformation map $g$ permutes the connected components of $p^{-1}(K)$. Therefore $g(K')=K'$.
\end{Proof}

\begin{Lem} \label{CorLem}
Let $M$ be a graph-manifold of the class $\mathfrak{M}_s$,  $p\colon N\to M$ be the universal covering, and $\Gamma_N(U,F)$ be the graph of~$N$. 
\begin{enumerate}
\item The restriction maps $N_u\to M_{p_*(u)}$ and $P_f\to T_{p_*(f)}$ are  universal coverings  for all $u\in U$, $f\in F$.
\item $\Gamma_N$ is a regular tree of countably infinite degree.
\end{enumerate}
\end{Lem}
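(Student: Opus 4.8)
The plan is to derive the first assertion directly from Lemma~\ref{s} and then to bootstrap it into the structural statements about $\Gamma_N$. For the first assertion I would first check that the blocks $M_v$ and the JSJ-tori $T_e$ are connected and $\pi_1$-injectively embedded in $M$: the tori are incompressible by the very definition of a graph-manifold, and the blocks embed $\pi_1$-injectively because they arise by cutting along incompressible surfaces, a standard feature of decompositions along incompressible tori (see \cite{AFW,BS}). Granting this, the connected components of $p^{-1}(M_v)$ are precisely the blocks $N_u$ with $p_*(u)=v$ and the components of $p^{-1}(T_e)$ are precisely the planes $P_f$ with $p_*(f)=e$; part~1 of Lemma~\ref{s} then says at once that each restriction $N_u\to M_{p_*(u)}$ and $P_f\to T_{p_*(f)}$ is a universal covering. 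In particular every $N_u$ and every $P_f$ is simply connected, which is exactly the input needed for the second assertion.

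For the tree property I would first observe that $\Gamma_N$ is connected because $N$ is: any path in $N$ joining points of two blocks crosses a finite chain of blocks, consecutive ones sharing a plane $P_f$, and this chain is an edge-path in $\Gamma_N$. To see that $\Gamma_N$ has no cycles I would view $N$ as a \emph{graph of spaces} over $\Gamma_N$, with vertex spaces the blocks $N_u$ and edge spaces the planes $P_f$ (thickened to collars $P_f\times[0,1]$). Since every vertex and edge space is simply connected by the first assertion, the van~Kampen theorem for graphs of spaces identifies $\pi_1(N)$ with $\pi_1(\Gamma_N)$, the free group on the first Betti number of the graph. As $N$ is the universal covering of $M$, it is simply connected, so $\pi_1(\Gamma_N)=1$, and a connected graph with trivial fundamental group is a tree. (Equivalently, since the pieces are in fact contractible, the collapse map $N\to\Gamma_N$ is a homotopy equivalence and $\Gamma_N$ inherits contractibility.)

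It remains to compute the vertex degrees. Because $M$ is closed, $N$ has no boundary, so every boundary plane of a block $N_u$ is glued to a neighbouring block and hence carries exactly one edge at $u$; thus the degree of $u$ equals the number of connected components of $p|_{N_u}^{-1}(\partial M_{p_*(u)})$. Writing the block as a trivial fibration $M_v\cong\Sigma_v\times S^1$ over an orientable surface $\Sigma_v$ of negative Euler characteristic, we have $\pi_1(M_v)\cong F\times\Z$ with $F=\pi_1(\Sigma_v)$ a nonabelian free group of rank $\ge 2$, and each boundary torus contributes the peripheral subgroup $\langle c\rangle\times\Z$, where $c\in F$ is the corresponding boundary loop. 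Since $N_u\to M_v$ is the universal covering, the number of its lifts (the components of its preimage) equals the index $[\pi_1(M_v):\langle c\rangle\times\Z]=[F:\langle c\rangle]$, which is countably infinite because $\langle c\rangle$ is an infinite-index cyclic subgroup of the nonabelian free group $F$. Summing over the finitely many boundary tori of $M_v$ still yields countably many planes, so every vertex of $\Gamma_N$ has countably infinite degree; as this holds for all blocks, $\Gamma_N$ is a regular tree of countably infinite degree.

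I expect the main obstacle to be the rigorous justification of the tree property, namely setting up the graph-of-spaces structure on $N$ so that the van~Kampen computation $\pi_1(N)\cong\pi_1(\Gamma_N)$ applies cleanly. The remaining points---the application of Lemma~\ref{s} and the index computation for the degrees---are then routine, the only arithmetic subtlety being the count that an infinite-index cyclic subgroup of a free group has countably, rather than uncountably, many cosets.
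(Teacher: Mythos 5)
Your proposal is correct, and part~1 follows the paper exactly: verify that the tori (incompressible, hence $\pi_1$-injective in the orientable case) and the blocks are $\pi_1$-injectively embedded and invoke item~1 of Lemma~\ref{s}. For part~2 you diverge from the paper in presentation rather than in substance: the paper simply cites \cite[\S 1.5]{BK00} for the identification of $\Gamma_N$ with the Bass--Serre covering tree of the graph of groups $(\Gamma_M,\pi_1(M_v),\pi_1(T_e))$, and then observes that each edge group $\pi_1(T_e)$ has infinite index in the adjacent vertex groups, whereas you unpack that citation into a direct argument---$N$ as a graph of spaces over $\Gamma_N$ with simply connected (indeed contractible) pieces, van~Kampen giving $\pi_1(\Gamma_N)\cong\pi_1(N)=1$, hence a tree---and then compute the vertex degree as the index $[\pi_1(M_v):\langle c\rangle\times\Z]=[F:\langle c\rangle]=\aleph_0$, which is exactly the same index count the paper makes. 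Your version buys self-containedness at the cost of length; the paper's buys brevity at the cost of an external reference. One presentational nitpick: the count of components of $p|_{N_u}^{-1}(T_e)$ is in general a double-coset count, and it reduces to the coset index $[\pi_1(M_v):\pi_1(T_e)]$ precisely because $N_u\to M_{p_*(u)}$ is the \emph{universal} covering (part~1); it is worth saying explicitly that this is where part~1 is being used.
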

\begin{Proof} 1.  The incompressible tori of JSJ-decomposition are  $\pi_1$-injective because $M$ is orientable.
So, the assertion follows from item~1 of Lemma~\ref{s}.

2. It is explaned in \cite[\S 1.5]{BK00} that $\Gamma_N$ is isomorphic to the Bass--Serre covering tree of the graph of groups 
$$(\Gamma_M(V,E),\pi_1(M_v),\pi_1(T_e)).$$ 
Since each edge group $\pi_1(T_e)$ has infinite index in $\pi_1(M_v)$ (whenever $T_e\subset \partial M_v$),
it follows that each vertex of $\Gamma_N$ has (countably) infinite degree. 
\end{Proof}
 
Now, we are looking for stabilizers
$$\Stab(u)=\{g\in D_N \colon\,g(u)=u\}$$ of vertices of $\Gamma_N$. By Lemma~\ref{s}, we can (and  will) identify $\Stab(u)$ and $D_{N_u}$ -- 
the deck transformation group of the universal covering $N_u\to M_{p_*(u)}$, and also $\Stab(f)$ and $D_{P_f}$ -- 
the deck transformation group of the universal covering $P_f\to T_{p_*(f)}$, for all $u\in U$, $f\in F$.

\begin{Lem}\label{stabs}
Let $M$ be a graph-manifold of the class $\mathfrak{M}_s$, and $\Gamma_N(U,F)$ be the graph of its universal covering space.
\begin{itemize}
%
\item[1)] If $u,v\in U$ are two vertices with $|uv|=2$, then $\Stab(u)\cap\Stab(v)$ is an infinite cyclic subgroup 
whose elements correspond to the powers of the fiber class of the block $M_{p_*(w)}$, where $w\in U$ is the vertex between $u$ and~$v$.

\item[2)] For each pair of vertices $u,w\in U$  with $|uw|\ge 3$, the intersection subgroup
 $\Stab(u)\cap \Stab(w)$ is trivial.
\end{itemize}
\end{Lem}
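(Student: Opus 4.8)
First I would record a tree-theoretic reduction that is independent of the $3$-manifold geometry. Since, by Lemma~\ref{CorLem}, $\Gamma_N$ is the Bass--Serre covering tree, $D_N$ acts on it by simplicial automorphisms without inversions; hence $\Stab([x,y])=\Stab(x)\cap\Stab(y)$ for every edge $[x,y]$, and any element fixing two vertices fixes the geodesic joining them pointwise. In the setting of item~1), writing $f_1=[u,w]$ and $f_2=[w,v]$ for the two edges of the geodesic $[u,v]$, this gives
$$\Stab(u)\cap\Stab(v)=\Stab(f_1)\cap\Stab(f_2)\subseteq\Stab(w),$$
so everything takes place inside the single block group $\Stab(w)\cong\pi_1(M_{p_*(w)})$. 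Because $M\in\mathfrak{M}_s\subset\mathfrak{M}_0$, the block $M_{p_*(w)}$ is a \emph{trivial} circle bundle $\Sigma\times S^1$ over a surface with $\chi(\Sigma)<0$, so $\Stab(w)\cong\pi_1(\Sigma)\times\langle\phi\rangle$ with $\pi_1(\Sigma)$ free nonabelian and $\phi$ the central fiber class, and $N_w\cong\widetilde\Sigma\times\R$.

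For item~1) I would then identify the two edge-stabilizers inside this product. Each $P_{f_i}$ is a boundary plane of $N_w$, i.e.\ a component $\ell_i\times\R$ of $\partial\widetilde\Sigma\times\R$ over a boundary line $\ell_i$ of $\widetilde\Sigma$; its stabilizer is $C_i\times\langle\phi\rangle$, where $C_i\le\pi_1(\Sigma)$ is the maximal cyclic subgroup carrying $\ell_i$. The fiber $\phi$, being central and contained in every peripheral subgroup, lies in the intersection. For the reverse inclusion I would use that in a free group two \emph{distinct} maximal cyclic subgroups meet trivially (a common nontrivial element would put both into its cyclic centraliser, forcing equality by maximality), together with the fact that distinct boundary lines $\ell_1\ne\ell_2$ are the axes of distinct such subgroups $C_1\ne C_2$. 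Hence $C_1\cap C_2=1$ and
$$\Stab(f_1)\cap\Stab(f_2)=(C_1\times\langle\phi\rangle)\cap(C_2\times\langle\phi\rangle)=\langle\phi\rangle,$$
the infinite cyclic group generated by the fiber class of $M_{p_*(w)}$, as asserted.

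For item~2) I would run along the geodesic $u=x_0,x_1,\dots,x_k=w$, $k\ge3$. Any $g\in\Stab(u)\cap\Stab(w)$ fixes it pointwise, so in particular $g\in\Stab(x_0)\cap\Stab(x_2)$ and $g\in\Stab(x_1)\cap\Stab(x_3)$. Applying item~1) to these two distance-$2$ pairs (midpoints $x_1$ and $x_2$) gives $g=\phi_{x_1}^{\,m}=\phi_{x_2}^{\,n}$ for the fiber classes of the adjacent blocks at $x_1$ and $x_2$. Both $\phi_{x_1}$ and $\phi_{x_2}$ fix the common edge $[x_1,x_2]$, hence lie in $\Stab([x_1,x_2])\cong\pi_1(T)\cong\Z^2$. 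Here I would invoke the defining feature of the JSJ gluing: the two Seifert fibers meeting along a JSJ-torus $T$ project to \emph{linearly independent} primitive classes of $\pi_1(T)\cong\Z^2$ (proportional fibers would let the fibrations match across $T$, making $M$ Seifert fibered, contrary to hypothesis). Linear independence forces $m=n=0$, i.e.\ $g=1$.

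The genuine content sits in two places. In item~1) it is the free-group input --- distinct boundary curves give distinct maximal cyclic subgroups, which rests on $\chi(\Sigma)<0$ and on boundary loops being indivisible in $\pi_1(\Sigma)$. The real obstacle, however, is the non-matching of adjacent fibers used in item~2): this is exactly the condition that $M$ is a graph-manifold rather than a Seifert fibered space, and it must be transported from $M$ up to the edge-group $\Stab([x_1,x_2])$ through the universal coverings of Lemma~\ref{CorLem}, under which fiber classes correspond to fiber classes and linear independence is preserved.
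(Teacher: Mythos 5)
Your proof is correct and its skeleton coincides with the paper's: reduce item~1) to the middle block $\Stab(w)\cong\pi_1(\Sigma)\times\Z$, show the intersection of the two peripheral (edge) stabilizers is exactly the central fiber subgroup, and then kill item~2) by playing the fiber classes of two adjacent blocks against each other inside the stabilizer of a common JSJ-plane. The difference is only in how the two key facts are certified. Where you argue algebraically that distinct maximal cyclic subgroups of a free group intersect trivially, the paper argues geometrically that the two loops of the class $L_{g_w}$ lie on distinct boundary tori, project to freely homotopic loops on distinct boundary components of the base surface $B$, and are therefore contractible there (forcing $g_w$ to be a power of the fiber); these are two phrasings of the same $\chi(\Sigma)<0$ input. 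Likewise, where you derive $m=n=0$ from linear independence of the two fiber classes in $\Stab([x_1,x_2])\cong\Z^2$, the paper says that $g$ acts on the plane $P_{f_2}$ as a shift along each of two \emph{transversal} foliations by lifted fibers, hence acts trivially. One small imprecision on your side: the reason the two adjacent fiber classes are independent in $\pi_1(T)$ is the \emph{minimality} of the JSJ collection (a matching fiber would make that torus removable, so it could not occur in a JSJ-surface), not that $M$ as a whole would become Seifert fibered --- with several tori present, one redundant torus does not by itself make $M$ Seifert. The conclusion is unaffected, since manifolds of the class $\mathfrak{M}_s$ carry a genuine (minimal) JSJ-surface, and this is exactly the justification the paper gives.
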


\begin{Proof}
%
%
 %
%
%
{\it 1)}  Let $u,v\in U$ be vertices with $|uv|=2$ and $w\in U$ be the vertex between them. 
If $g(u)=u$ and $g(v)=v$, then $g(w)=w$ because $\Gamma_N$ is a tree. 
Consequently, we have $g(N_w)=N_w$,
$g(P_f)=P_f$, and $g(P_{f'})=P_{f'}$, where  $P_f=N_u\cap N_w$ and $P_{f'}=N_w\cap N_v$. 

Recall that $p|_{N_w}\colon N_w\to M_{p_*(w)}$ is a universal covering map (see Lemma~\ref{CorLem}) and the restriction $g_w$ of $g$
on $N_w$ is in $D_{N_w}$ (Lemma~\ref{s}). Applying  Lemma~\ref{s} to the pairs $T_{p_*(f)}\subset M_{p_*(w)}$ and 
$T_{p_*(f')}\subset M_{p_*(w)}$, we see that there exist loops $\gamma\subset T_{p_*(f)}$ and $\gamma'\subset T_{p_*(f')}$ of the class $L_{g_w}$, and that they are freely homotopic in $M_{p_*(w)}$. Let $F:S^1\times I\to M_{p_*(w)}$ be a free homotopy between them.

Since $M$ belongs to the class $\mathfrak{M}_s$, the block $M_{p_*(w)}$ is homeomorphic to a trivial Seifert fibration with base surface $B$ (of negative Euler characteristic). 
Let $\phi\colon M_{p_*{(w)}}\to B$ be the fibering map.
Then the composition $\phi\circ F$ is a homotopy between the loops $\phi(\gamma)$ and $\phi(\gamma')$ that lie on distinct boundary components of~$B$. 
It follows that the loops $\phi(\gamma)$ and $\phi(\gamma')$ are contractible which implies that the loops $\gamma$ and $\gamma'$ are homotopic to some powers of a fiber of $\phi$. 

In other words, we have shown that each element of $\Stab(v)\cap\Stab(u)$ is in $\Stab(w)$ and corresponds to some power of the fiber class of the block $M_{p_*(w)}$.
Next, we show that those elements in $\Stab(w)\simeq D_{N_w}$ that correspond to the powers of the fiber class of $M_{p_*(w)}$ all lie in $\Stab(v)\cap\Stab(u)$ and form an infinite cyclic subgroup.

Indeed, observe that the universal covering space $N_w$ of the block $M_{p_*(w)}=S^1\times B$ has the structure of the product $R\times \widetilde{B}$, where $R$ and $\widetilde{B}$ are the universal covering spaces for $S^1$ and $B$, respectively. We also have
$$
D_{N_w}\simeq\pi_1(M_{p_*(w)})=\pi_1(S^1)\times\pi_1(B),
$$
where $\pi_1(S^1)$ is an infinite cyclic and $\pi_1(B)$ is a non-Abelian free group.
Therefore, the powers of the fiber class form an infinite cyclic subgroup in~$\pi_1(M_{p_*(w)})$, which is the center of~$\pi_1(M_{p_*(w)})$. Consequently, in $D_{N_w}$, the elements corresponding to the powers of the fiber class (this correspondence is defined up to the conjugacy equivalence) form an infinite cyclic subgroup~$Z\subset D_{N_w}$. Clearly, all elements of $Z$ preserve the planes of $\partial N_w$ whence it follows that $Z\subset \Stab(v)\cap\Stab(u)$.

We have thus proved that $Z=\Stab(v)\cap\Stab(u)$.

{\it 2)} Let $u,w\in U$ be vertices with $|uw|\ge 3$. Let $g\in \Stab(u)\cap\Stab(w)$. Since $\Gamma_N$ is a tree, there is a unique path between $u$ and $w$ in $\Gamma_N$ and $g$ fixes all vertices of this path.
In particular, since $|uw|\ge 3$, there exist vertices $u_1,u_2,u_3,u_4\in\Gamma_N$ with $|u_iu_j|=|i-j|$ such that $g(u_i)=u_i$ for $i=1,2,3,4$.
Let $f_i\in F$ be the edge connecting $u_i$ and $u_{i+1}$ ($i=1,2,3$). 

Recall that the blocks $M_{p_*(u_i)}$ are homeomorphic to  trivial Seifert fibrations with some base surfaces~$B_i$.
Let $\phi_i\colon M_{p_*{(u_i)}}\to B_i$ be the fibering maps.
Applying the argument from the above proof of item 1 to the pair $u_1,u_3\in U$, we see that $g$ acts on the plane $P_{f_2}$ as a shift along the lines
\begin{equation}\label{f1}
\{(\phi_2\circ p)^{-1}(x) \mid x\in \phi_2(T_{p_*(f_2)})\}
\end{equation}
(which are the inverse images of fibers of~$\phi_2$).
The same argument applied to the pair $u_2,u_4\in U$ shows that $g$ acts on $P_{f_2}$ as a shift along lines
\begin{equation}\label{f2}
\{(\phi_3\circ p)^{-1}(x) \mid x\in \phi_3(T_{p_*(f_2)})\}.
\end{equation}
The foliations \eqref{f1} and \eqref{f2} of $P_{f_2}$  are transversal by the minimality of the JSJ-surface.
Therefore, $g$ acts on $P_{f_2}$ identically. 
\end{Proof}

\begin{Lem} Let $M$ be a graph-manifold of the class $\mathfrak{M}_s$, and $\Gamma_N(U,F)$ be the graph of its universal covering space.

Then for any vertex $u\in U$ of the graph $\Gamma_N$ there exists an element $g\in \Stab(u)$ such that
$u$ is the only vertex of $\Gamma_N$ fixed by $g$.
\end{Lem}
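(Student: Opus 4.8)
The plan is to reduce the statement to an elementary fact about free groups. First I would invoke that $\Gamma_N$ is a tree (Lemma~\ref{CorLem}): the vertices fixed by any $g\in\Stab(u)$ span a subtree containing $u$, so $u$ is the \emph{unique} fixed vertex of $g$ precisely when $g$ fixes no neighbour of $u$. A neighbour $u'$, joined to $u$ by an edge $f$, is fixed by $g$ if and only if $g\in\Stab(u)\cap\Stab(u')=\Stab(f)$. Hence it suffices to exhibit an element of $\Stab(u)$ lying in none of the edge groups $\Stab(f)$ with $f$ incident to $u$.

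Next I would make the relevant groups explicit. By the identification preceding Lemma~\ref{stabs} we have $\Stab(u)=D_{N_u}\simeq\pi_1(M_{p_*(u)})$, and since $M\in\mathfrak{M}_s$ the block $M_{p_*(u)}$ is a trivial fibration $S^1\times B$ over a surface $B$ with $\chi(B)<0$; thus $\Stab(u)\simeq\pi_1(S^1)\times\pi_1(B)=\Z\times F$, where $F=\pi_1(B)$ is free of rank $\ge 2$ and the fibre factor $\Z$ is central (exactly as in the proof of Lemma~\ref{stabs}). The edges $f$ incident to $u$ correspond to the boundary planes $P_f\subset\partial N_u$; each such plane covers a boundary torus $T_i$ of $M_{p_*(u)}$, and (as in the proof of item~1 of Lemma~\ref{CorLem}) its stabiliser $\Stab(f)$ inside $D_{N_u}$ is a conjugate of $\pi_1(T_i)=\Z\times\langle c_i\rangle$, where $c_1,\dots,c_b\in F$ are the finitely many boundary classes of $B$. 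As $f$ ranges over the edges at $u$, these conjugates exhaust the incident edge groups.

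Because the fibre factor $\Z$ is central, conjugation inside $\Stab(u)$ affects only the $F$-coordinate. Writing a candidate element as $g=(m,w)\in\Z\times F$, I note that $g$ lies in some incident edge group if and only if $w$ is conjugate in $F$ into one of the boundary subgroups $\langle c_i\rangle$. It therefore suffices to pick $w\in F$ not conjugate into any $\langle c_i\rangle$ and set $g=(0,w)$: this $g$ fixes $u$ but no neighbour of $u$, so $u$ is its only fixed vertex. Such a $w$ exists because the boundary classes $c_1,\dots,c_b$ determine only finitely many conjugacy classes of (maximal) cyclic subgroups of the free group $F$ of rank $\ge 2$; concretely one may take $w$ to be the class of a closed curve in $B$ that is not freely homotopic into $\partial B$, and such non-peripheral classes exist on any surface with $\chi(B)<0$.

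I expect the only point needing care to be the identification in the second step: verifying that the stabiliser in $D_{N_u}$ of a boundary plane $P_f$ is exactly the conjugate of $\pi_1(T_i)$ arising from the universal covering $N_u\to M_{p_*(u)}$, and that the central fibre class is common to all these peripheral subgroups, so that the whole question descends to the free quotient $F$. Once this bookkeeping is in place, the existence of a non-peripheral element of $F$ is routine and the tree structure closes the argument.
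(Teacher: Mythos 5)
Your proof is correct and follows essentially the same route as the paper's: both arguments reduce to choosing a loop in the block whose projection to the base surface $B$ (with $\chi(B)<0$) is non-peripheral, and concluding that the corresponding deck transformation preserves no boundary plane of $N_u$, hence fixes no neighbour of $u$ in the tree. You make the covering-space bookkeeping explicit via the splitting $\Stab(u)\simeq\Z\times\pi_1(B)$ and conjugates of the peripheral subgroups, where the paper instead invokes item~2 of Lemma~\ref{s}, but the underlying idea is identical.
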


\begin{Proof} Let $u\in U$. 
Consider a trivial Seifert fibration $\phi\colon M_{p_*(u)}\to B$. Take a loop $\gamma\colon I\to M_{p_*(w)}$ such that the loop $\phi\circ\gamma$ is non-peripheral (not freely homotopic to a loop lying in $\partial B$). Then $\gamma$ is non-peripheral in $M_{p_*(u)}$.

Let $g_\gamma$ be an element corresponding to~$\gamma$ and lying in $\Stab(u)$.
Then $g_\gamma$ preserves no plane from $\partial N_u$ (because otherwise $\gamma$ would be peripheral by assertion~2 of Lemma~\ref{s}). 
This clearly means that $g_\gamma$ permutes the connected components of $\Gamma_N\setminus\{u\}$ fixing no one of them.
\end{Proof}

\begin{Cor}\label{orbsubtr}Let $M$ be a graph-manifold of the class $\mathfrak{M}_s$, and $\Gamma_N(U,F)$ be the graph of its universal covering space.
Then the action of $D_N$ on  $\Gamma_N\cup \Ends (\Gamma_N)$ has neither finite orbits nor proper invariant subtrees.
\end{Cor}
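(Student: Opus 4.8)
The plan is to verify the two ``non-elementarity'' conditions required by Theorem~\ref{Th0} separately, extracting a supply of hyperbolic isometries from the last lemma and combining them with Lemma~\ref{stabs}. First I would manufacture, for \emph{every} edge of $\Gamma_N$, a hyperbolic element whose axis crosses it. Given an edge $f=\{a,b\}$, the last lemma supplies $g_a\in\Stab(a)$ fixing only the vertex $a$ and $g_b\in\Stab(b)$ fixing only $b$; since each of these elements moves every neighbour of its fixed vertex (equivalently, fixes no incident edge), its fixed-point set in the geometric tree is exactly the single vertex, so $\mathrm{Fix}(g_a)=\{a\}$ and $\mathrm{Fix}(g_b)=\{b\}$ are disjoint. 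By the standard fact that a product of two elliptic tree-automorphisms with disjoint fixed-point sets is hyperbolic, with axis containing the bridge between those sets, the element $h_f:=g_ag_b$ is hyperbolic and its axis contains the segment $[a,b]=f$.

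Granting this, the absence of proper invariant subtrees follows quickly. Any nonempty $D_N$-invariant subtree $\Gamma'$ is convex and $\langle h_f\rangle$-invariant, hence contains the axis of $h_f$: picking $x\in\Gamma'$, the geodesic $[x,h_fx]\subseteq\Gamma'$ already contains a fundamental segment of the axis, and translating by powers of $h_f$ sweeps out the entire axis. Thus $f\in\Gamma'$ for every edge $f$, so $\Gamma'=\Gamma_N$. The same hyperbolic elements dispose of finite orbits of vertices and finite invariant subtrees at once: a hyperbolic isometry has no periodic points and no fixed point, so there is no finite $D_N$-orbit of vertices (nor of edges) and no finite invariant subtree.

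It remains to exclude finite orbits on $\Ends(\Gamma_N)$, which I expect to be the delicate point. Suppose $O\subseteq\Ends(\Gamma_N)$ is a finite orbit of size $k$. If $k=1$, then all of $D_N$ fixes a single end $\xi$; choosing any vertex $u$ and the element $g\in\Stab(u)$ furnished by the last lemma, $g$ fixes both $u$ and $\xi$, hence fixes the whole ray $[u,\xi)$ pointwise, contradicting that $u$ is the only vertex fixed by $g$. If $k\ge2$, let $H\le D_N$ be the finite-index subgroup fixing each end of $O$; it fixes two distinct ends $\xi_1,\xi_2$ individually, hence preserves the bi-infinite geodesic $\ell$ joining them together with its orientation, so $H$ acts on $\ell\cong\Z$ by translations. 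The kernel of this action fixes $\ell$ pointwise, in particular it fixes two vertices of $\ell$ at distance $\ge3$, so it is trivial by part~2 of Lemma~\ref{stabs}; therefore $H$ embeds in $\Z$ and is cyclic. But then $D_N\cong\pi_1(M)$ would be virtually cyclic, which is impossible since it contains a block group $\pi_1(M_{\bar u})\cong\Z\times F$ with $F$ non-abelian free.

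Since $D_N$ preserves the partition of $\Gamma_N\cup\Ends(\Gamma_N)$ into vertices and ends, every orbit lies wholly in one part, so the cases above exhaust all finite orbits and the corollary follows. The main obstacle is precisely the end-orbit analysis: the $k\ge2$ case is where the work concentrates, and the decisive input is the triviality of $\Stab(u)\cap\Stab(v)$ for far-apart vertices supplied by Lemma~\ref{stabs}, while the $k=1$ case is closed by the ``fixes only one vertex'' element of the preceding lemma.
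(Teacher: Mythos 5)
Your proof is correct, but it takes a genuinely different route from the paper's. The paper disposes of all three possibilities (finite vertex orbit, finite end orbit, proper invariant subtree) with a single separation argument: since $\Gamma_N$ is an infinite regular tree of infinite degree (Lemma~\ref{CorLem}), the convex hull of any such set $A$ is a proper subtree, so one can choose a vertex $u$ with $A$ contained in a single connected component of $\bigl(\Gamma_N\cup\Ends(\Gamma_N)\bigr)\setminus\{u\}$; the element $g\in\Stab(u)$ from the preceding lemma permutes the components of $\Gamma_N\setminus\{u\}$ fixing none of them, hence moves $A$ entirely off itself --- contradiction. That is the whole proof, and it uses only the preceding lemma. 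You instead manufacture hyperbolic elements via Serre's lemma (product of two elliptics with disjoint fixed trees), which cleanly kills proper invariant subtrees and finite vertex orbits, and then run a separate two-case analysis for finite end orbits, invoking part~2 of Lemma~\ref{stabs} to show that an orientation-preserving pointwise stabilizer of a line is trivial and concluding from the non-virtual-cyclicity of the block groups $\Z\times F$. All of your steps check out (in particular, the elliptic elements supplied by the preceding lemma do have fixed-point set exactly one vertex, since an automorphism fixing a vertex cannot invert an edge). Your argument is longer and leans on more machinery and on Lemma~\ref{stabs}, but it yields the extra information that every edge of $\Gamma_N$ lies on the axis of a hyperbolic element; the paper's argument is shorter, uniform across all three cases, and needs nothing beyond the ``rotation'' element of the preceding lemma.
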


\begin{Proof} Assume to the contrary that there exists a finite orbit or a proper invariant subtree
$$
A\subset \Gamma_N \cup \Ends (\Gamma_N).
$$ 
Since $\Gamma_N$ has an infinite number of vertices of infinite degree (see Lemma~\ref{CorLem}), we can take a vertex $u\in U$
such that $A$ lies in a connected component of 
$$
\Gamma_N \cup \Ends (\Gamma_N) \setminus\{u\}.
$$ 
As it follows from the previous lemma, there exists an element $g\in \Stab(u)$ which permutes connected components of $\Gamma_N\setminus\{u\}$ (fixing no one of them). This means that $A$ is not $G$-invariant which contradicts to our assumption. 
\end{Proof}

Now we recall the {\it geometric decomposition theorem} for 3-manifolds\footnote{
A manifold is called {\it geometric} if its interior admits a complete 
locally homogeneous Riemannian metric of finite volume.}.
\begin{Lem}\label{geomgm}(Thurston; see, e.g.,  \cite[Corollary 1.45]{K}.) 
Let M be a Haken (3-dimensional compact $\mathbb{P}^2$-irreducible 
containing a properly embedded two-sided incompressible surface) manifold.
Then $M$ admits a decomposition
along a finite collection $\mathcal{G}_M$ of disjoint
incompressible tori and Klein bottles into geometric components.
\end{Lem}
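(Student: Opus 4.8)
The plan is to obtain the decomposition in two stages: first split $M$ topologically along a canonical family of essential tori and Klein bottles, and then recognize each complementary piece as geometric. The first stage is the JSJ (torus) decomposition of Jaco--Shalen and Johannson; the second rests on Thurston's hyperbolization theorem for Haken manifolds, together with the elementary fact that Seifert fibered spaces already carry a geometry. In outline, $\mathcal{G}_M$ will be the characteristic splitting family, the pieces of $M$ cut along $\mathcal{G}_M$ will be Seifert fibered or atoroidal, and each of these will then be assigned one of Thurston's eight model geometries.

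First I would invoke characteristic submanifold theory. For a Haken manifold $M$ there is, up to isotopy, a unique minimal collection $\mathcal{G}_M$ of disjoint two-sided incompressible tori and Klein bottles such that every component of $M$ cut along $\mathcal{G}_M$ is either Seifert fibered or atoroidal (every incompressible torus in it is boundary-parallel and the piece is not itself Seifert fibered). The Klein bottles enter precisely because $M$ need not be orientable: a two-sided Klein bottle can be an essential splitting surface, and passing to the orientation double cover turns its twisted $I$-bundle neighborhood into the orientable $T^2\times I$ case. Minimality together with incompressibility of the surfaces of $\mathcal{G}_M$ is exactly what forces the atoroidal/Seifert dichotomy on the pieces.

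Next I would geometrize each piece. A Seifert fibered piece carries one of the six Seifert geometries ($\mathbb{E}^3$, $\mathrm{Nil}$, $S^2\times\R$, $S^3$, $\Hyp^2\times\R$, or $\widetilde{SL_2}$), determined by the Euler number of the fibration and the orbifold Euler characteristic of its base; in particular it is geometric. For an atoroidal piece that is not Seifert fibered I would apply Thurston's hyperbolization theorem: such a piece is a Haken manifold with (possibly empty) toral boundary whose interior admits a complete finite-volume hyperbolic metric, i.e.\ a $\Hyp^3$-structure. The only exceptional family is that of Sol manifolds, where the naive splitting produces a $T^2\times I$ reglued by an Anosov map; these are recognized as a single geometric component carrying Sol geometry. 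Assembling the pieces then yields the asserted decomposition into geometric components.

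The main obstacle is the hyperbolization step. The JSJ decomposition is topological and follows from the theory of the characteristic submanifold, and the geometricity of Seifert pieces is a direct case analysis on fibration invariants. By contrast, hyperbolizing the atoroidal pieces is the deep analytic input: in Thurston's original route it is obtained by cutting along the characteristic submanifold and iterating a skinning/fixed-point argument controlled by the double limit theorem for deformations of Kleinian groups, with Otal's argument and the later Ricci-flow methods providing alternative proofs. This is why the statement is quoted rather than proved here; for the present paper it suffices to cite it in the form of \cite[Corollary 1.45]{K}.
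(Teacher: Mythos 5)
The paper offers no proof of this lemma at all: it is quoted as Thurston's theorem with the reference to Kapovich's book, so there is nothing internal to compare your argument against. Your outline is the standard derivation (characteristic submanifold / JSJ splitting, Seifert pieces carrying one of the six Seifert geometries, Thurston's hyperbolization for the atoroidal Haken pieces, and the Sol exception), and you correctly identify that the only deep input is hyperbolization, which is properly cited rather than reproved. One point in your sketch is off, though: the Klein bottles in $\mathcal{G}_M$ are not there ``precisely because $M$ need not be orientable.'' Even for orientable $M$ the geometric decomposition must cut along \emph{one-sided} Klein bottles: whenever a JSJ torus bounds a twisted $I$-bundle over a Klein bottle, that $I$-bundle has no complete finite-volume locally homogeneous structure on its interior (a complete finite-volume flat or $\mathbb{E}^2\times\R$-type piece would have to be closed), so one replaces the boundary torus by the core Klein bottle and absorbs the $I$-bundle into the adjacent piece. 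This adjustment, together with the Sol case you do mention, is exactly what distinguishes the geometric decomposition from the JSJ decomposition, and it is why the lemma is stated with Klein bottles even though the paper later applies it only to orientable manifolds.
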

Note that any graph-manifold is $\mathbb{P}^2$-irreducible (i.\,e.,~does not contain a two-sided projective plane) and fibered  tori
in each Seifert fibered component are incompressible,
so all graph-manifolds are Haken.
If $M$ is a graph-manifold, then the corresponding collection $\mathcal{G}_M$ of tori and Klein bottles is nonempty and 
the corresponding geometric components are Seifert fibrations with boundary. Therefore\footnote{Each geometric manifold with boundary has geometry modelled either on $\Hyp^3\times\R$ or on $\Hyp^3$  (see \cite[\S~1.6]{AFW}).} they admit the geometric structure modelled on $\mathbb{H}^2\times\mathbb{R}$. Note that for graph-manifolds of the class $\mathfrak{M}_0$ (and $\mathfrak{M}_s$) the geometric decomposition coincides with the JSJ-one.

Let $M$ be a graph-manifold, $p_s\colon M_s\to M$ its finite-sheeted covering with
$M_s\in\mathfrak{M}_s$, $\Gamma_N(U,F)$ be the graph of their
common universal covering space. It is clear that $p_s^{-1}(\mathcal{G}_M)$
is a JSJ-surface of ${M_s}$.
Therefore each element of the deck transformation group of the covering $N\to M$ permutes the blocks of $N$.
This allows us to define an action of $\pi_1(M)$ on the tree $\Gamma_N$.

\begin{Pro} Let $M$ be a graph-manifold, 
and $\Gamma_N(U,F)$ be the graph of its universal covering space.  
Then the action of $\pi_1(M)$ on $\Gamma_N\cup \Ends (\Gamma_N)$ satisfies the conditions of Theorem \ref{Th0}.
\end{Pro}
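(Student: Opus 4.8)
The plan is to deduce the general case from the already-settled class $\mathfrak{M}_s$ by exploiting the finite-sheeted covering $p_s\colon M_s\to M$ with $M_s\in\mathfrak{M}_s$ fixed above. The conditions of Theorem~\ref{Th0} that refer to the measure $\mu$ (its support generating $G$ as a semigroup, finite entropy, finite first logarithmic moment) are hypotheses one imposes on $\mu$ rather than properties of the action, so what actually requires verification are the conditions on the group, the tree, and the dynamics. Since $M$ and $M_s$ share the common universal covering space $N$, they share the tree $\Gamma_N$, and writing $G:=\pi_1(M)$ and $H:=\pi_1(M_s)$ we obtain an action of $G$ on $\Gamma_N$ restricting on the finite-index subgroup $H\le G$ to the deck-transformation action already analysed in Section~2. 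The group $G$ is finitely generated and countable (being the fundamental group of a compact manifold), and $\Gamma_N$ is an infinite tree by Lemma~\ref{CorLem}, so the standing hypotheses on $G$ and $\Gamma$ hold.

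Next I would establish the two dynamical conditions --- absence of a proper $G$-invariant subtree and absence of a finite $G$-orbit on $\Ends(\Gamma_N)$ --- by descent to $H$, to which Corollary~\ref{orbsubtr} applies (taking $M_s$ in the role of the $\mathfrak{M}_s$-manifold). For subtrees the implication is immediate: a proper $G$-invariant subtree would a fortiori be a proper $H$-invariant subtree, contradicting Corollary~\ref{orbsubtr}. For ends, suppose some $\xi\in\Ends(\Gamma_N)$ had a finite $G$-orbit; its $H$-orbit is a subset of the $G$-orbit and hence also finite, again contradicting Corollary~\ref{orbsubtr}. Thus neither a proper invariant subtree nor a finite orbit on ends can occur for $G$.

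It remains to produce two vertices whose common $G$-stabilizer is finite. Here I would pick any two vertices $u,w$ of $\Gamma_N$ with $|uw|\ge 3$. By item~2 of Lemma~\ref{stabs} (applied to $M_s$) the intersection $\Stab(u)\cap\Stab(w)$ taken inside $H$ is trivial. Since the $H$-stabilizer of a vertex $x$ equals $\Stab(x)\cap H$ computed in $G$, the subgroup $K:=\Stab(u)\cap\Stab(w)$ (in $G$) meets $H$ only in the identity; as $[G:H]<\infty$, the natural map from $K$ into the coset space $G/H$ is injective, so $|K|\le[G:H]$ and $K$ is finite. This verifies the last required hypothesis.

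All the geometric content is concentrated in the results of Section~2 for the class $\mathfrak{M}_s$, namely Corollary~\ref{orbsubtr} and Lemma~\ref{stabs}; the remaining work is a routine transfer of properties from a finite-index subgroup to the ambient group. The step I expect to demand the most attention is the finiteness of $K=\Stab(u)\cap\Stab(w)$: here one genuinely uses the finiteness of the index $[G:H]$ to upgrade the \emph{triviality} of the stabilizer intersection for $H$ to mere \emph{finiteness} for $G$, which is exactly what Theorem~\ref{Th0} requires.
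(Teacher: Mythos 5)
Your proposal is correct and follows essentially the same route as the paper: pass to the finite-sheeted cover $M_s\in\mathfrak{M}_s$, apply Corollary~\ref{orbsubtr} and item~2 of Lemma~\ref{stabs} to $H=\pi_1(M_s)$, and transfer the conditions to the finite-index overgroup $G=\pi_1(M)$. The paper dismisses this transfer as ``clearly fulfilled,'' whereas you spell out the details --- in particular the correct observation that triviality of $\Stab(u)\cap\Stab(w)$ in $H$ upgrades only to \emph{finiteness} in $G$ via the injection into $G/H$, which is exactly what Theorem~\ref{Th0} demands.
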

\begin{Proof} First note that if a group $G$ acts on a tree $T$ such that the conditions of Theorem~\ref{Th0} are fulfilled
for some subgroup $H\subset G$ of finite index, then these conditions are clearly fulfilled for $G$ also.

 Let $p_s\colon M_s\to M$ be a finite-sheeted covering and $M_s\in\mathfrak{M}_s$.
As it follows from Corollary \ref{orbsubtr} and  item 2 of Lemma \ref{stabs}, the action of (finitely generated group) $\pi_1(M_s)$
on the tree $\Gamma_N$ satisfies the conditions of Theorem \ref{Th0} (recall that the fundamental group of any compact manifold is finitely generated). 

Let us fix a point $x\in M_s$. 
The group $\pi_1(M_s,x)$ is a subgroup of a finite index in $\pi_1(M,p_s(x))$. The action of $\pi_1(M,p_s(x))$ on the tree $\Gamma_N$
extends the action of $\pi_1(M_s,x)$. So the action of  $\pi_1(M)$ on $\Gamma_N\cup \Ends (\Gamma_N)$ satisfies the  conditions of Theorem \ref{Th0} also.
\end{Proof}

\section{The remaining cases} 
In this section we list results that provide descriptions of Poisson--Fur\-sten\-berg boundaries for fundamental groups of closed 3-manifolds that are not graph-manifolds.

Recall that, for certain types of measures, the problem of describing the Poisson--Fur\-sten\-berg boundary of a group reduces to the problem of describing the boundary for a subgroup of finite index.

\begin{Lem}
\label{virtual}  
\cite{Ka91-solv, Fur71}
Let $G$ be a discrete group, $\mu$ a probability measure on~$G$, and $F$ a subgroup of finite index in~$G$.  Define a probability measure $\nu$ on $F$ as the distribution of the point where the random walk 
issued from the identity of $G$ returns for the first time to~$F$.
Then the Poisson--Furstenberg boundaries of $(G,\mu)$ and $(F,\nu)$ are isomorphic.

\cite{Ka91-solv,Fo14,HLT12} If, in addition, $G$ is finitely generated and the measure $\mu$ has finite first moment (resp., finite entropy, finite first logarithmic moment) in~$G$, then $\nu$ has finite first moment (resp., finite entropy, finite first logarithmic moment) in~$F$. 
\end{Lem}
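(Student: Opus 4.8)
The plan is to realize $\nu$ as the law of the first positive return of the $\mu$-random walk to $F$ and to transfer both the boundary and the moment information through the associated stopping time. Write the $\mu$-random walk as $X_n = s_1 s_2 \cdots s_n$, with $s_i$ i.i.d.\ of law $\mu$ and $X_0 = e$, and set $\tau = \inf\{n \ge 1 : X_n \in F\}$, so that $\nu$ is the distribution of $X_\tau$. The first thing I would establish is that $\tau < \infty$ almost surely and in fact $\mathbb{E}[\tau] < \infty$. For this I would project the walk to the finite set of right cosets $F\backslash G$: the rule $Fx \mapsto Fxs$ with $s \sim \mu$ defines a Markov chain on a finite state space, and since $\mathrm{supp}\,\mu$ generates $G$ as a semigroup this chain is irreducible. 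An irreducible chain on a finite state space is positive recurrent with geometric tails for its return times, so the return time $\tau$ to the state $F$ has finite expectation (and even finite exponential moments); in particular $\nu$ is a genuine probability measure on $F$.

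For the first assertion I would use the description of the Poisson--Furstenberg boundary via bounded harmonic functions (equivalently, as the exit boundary of the path space). Let $0 = \tau_0 < \tau_1 < \tau_2 < \cdots$ be the successive return times of $(X_n)$ to $F$; by the strong Markov property the positions $Y_k = X_{\tau_k}$ form a random walk on $F$ whose increments $Y_k^{-1}Y_{k+1}$ are i.i.d.\ of law $\nu$. Since $\tau_k \to \infty$, the sequence $(Y_k)$ is an infinite subsequence of $(X_n)$, so the two walks converge to the same boundary point and generate, up to null sets, the same tail $\sigma$-algebra. Concretely, restriction sends a bounded $\mu$-harmonic function on $G$ to a bounded $\nu$-harmonic function on $F$, and the harmonic extension $h \mapsto \big(g \mapsto \mathbb{E}_g[h(X_\tau)]\big)$ is its inverse; these maps are mutually inverse isomorphisms of the algebras of bounded harmonic functions, which identifies the Poisson boundary of $(F,\nu)$ with that of $(G,\mu)$.

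The second assertion I would obtain from a single Wald-type estimate. For a nonnegative functional $\psi$ on $G$ I would use
\begin{equation*}
\mathbb{E}\Big[\sum_{i=1}^{\tau}\psi(s_i)\Big] = \sum_{i\ge 1}\mathbb{E}\big[\psi(s_i);\,\tau \ge i\big] = \mathbb{E}[\tau]\,\mathbb{E}_\mu[\psi],
\end{equation*}
the key point being that $\{\tau \ge i\} = \{X_1,\dots,X_{i-1}\notin F\}$ depends only on $s_1,\dots,s_{i-1}$ and is therefore independent of $s_i$. Taking $\psi = |\cdot|_G$ and using subadditivity of the word norm under products, $|X_\tau|_G \le \sum_{i=1}^{\tau}|s_i|_G$, yields a finite first moment for $\nu$; taking $\psi = \log(1+|\cdot|_G)$ and using $\log(1+\sum a_i) \le \sum\log(1+a_i)$ yields a finite first logarithmic moment; and taking $\psi = -\log\mu(\cdot)$, where the stopped word $W = (s_1,\dots,s_\tau)$ satisfies $-\log\mathbb{P}(W) = \sum_{i=1}^{\tau}\big(-\log\mu(s_i)\big)$ exactly, yields $H(\nu) \le H(W) = \mathbb{E}[\tau]\,H(\mu) < \infty$. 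Since $F$ has finite index in the finitely generated group $G$, the inclusion $F \hookrightarrow G$ is a quasi-isometry, so the word norms $|\cdot|_F$ and the restriction of $|\cdot|_G$ are bi-Lipschitz equivalent on $F$ and each finiteness statement transfers to the intrinsic $F$-norm.

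The main obstacle I anticipate is not in the moment estimates, which reduce cleanly to $\mathbb{E}[\tau] < \infty$ together with the predictability of $\{\tau \ge i\}$, but in the first assertion: one must verify that restriction and harmonic extension are genuinely inverse and that the resulting identification of boundaries respects the harmonic measures, i.e.\ that the subsequence $(Y_k)$ has the same hitting distribution on the boundary as $(X_n)$. This is exactly the subordination of the $G$-walk to its trace on $F$, and it is where the almost-sure finiteness of the return times is used decisively.
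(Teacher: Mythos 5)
Your proposal is essentially correct, and it is worth noting that the paper does not actually prove this lemma: its proof delegates the first assertion to Lemma~2.2 of \cite{Ka91-solv} (and Sec.~4.3 of \cite{Fur71}), the entropy statement to \cite{Fo14,HLT12}, and the moment statements to Lemma~2.3 of \cite{Ka91-solv} together with the remark that Kaimanovich's argument for normal subgroups extends to arbitrary finite-index subgroups by replacing the random walk on the quotient group with the Markov chain on the coset space. Your sketch reconstructs precisely the content of those references: the induced walk $Y_k=X_{\tau_k}$, the restriction/extension duality for bounded harmonic functions, and a Wald identity for the moments. The single identity $\mathbb{E}\bigl[\sum_{i\le\tau}\psi(s_i)\bigr]=\mathbb{E}[\tau]\,\mathbb{E}_\mu[\psi]$ handling all three moment conditions at once --- with $\psi=|\cdot|$, $\psi=\log(1+|\cdot|)$ and $\psi=-\log\mu(\cdot)$, the last via the exact identity $-\log\mathbb{P}(W)=\sum_{i\le\tau}(-\log\mu(s_i))$ for the stopped word --- is correct and is in substance the Abramov-type computation of \cite{Fo14,HLT12}; the transfer from $|\cdot|_G$ to an intrinsic word norm on $F$ via quasi-isometry is also fine.

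Two points need repair or completion. First, your derivation of $\mathbb{E}[\tau]<\infty$ from irreducibility of the coset chain invokes the hypothesis that $\operatorname{supp}\mu$ generates $G$ as a semigroup, which is not among the assumptions of the lemma. This is easily avoided: the transition matrix of the chain on $F\backslash G$ is doubly stochastic (for fixed $Fy$ the sets $x^{-1}Fy$, with $x$ running over a set of coset representatives, partition $G$), so the uniform measure is stationary, every coset --- in particular $F$ itself --- is positive recurrent, and the return time has finite expectation with no irreducibility needed. Second, for the first assertion you correctly set up restriction and harmonic extension $h\mapsto\bigl(g\mapsto\mathbb{E}_g[h(X_\tau)]\bigr)$ as mutually inverse maps between the spaces of bounded $\mu$-harmonic and bounded $\nu$-harmonic functions, but a linear isomorphism of these Banach spaces does not by itself identify the Poisson boundaries as measure $G$-spaces; you flag this yourself without closing it. The missing step is to implement the identification on the path space: for a bounded $\mu$-harmonic $h$ the bounded martingale $h(X_n)$ and its subsequence $h(Y_k)$ have the same almost sure limit, so the exit $\sigma$-algebras of the two walks coincide mod~$0$ under the coupling by the return times, and the induced map on $L^\infty$ of the boundaries is multiplicative, hence comes from a measure-space isomorphism. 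That verification is exactly what Lemma~2.2 of \cite{Ka91-solv} supplies, and it is the only place where your outline remains a sketch rather than a proof.
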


\begin{proof}
The first assertion of the lemma follows from Lemma~2.2 of~\cite{Ka91-solv} (see also Sec.~4.3 in~\cite{Fur71}).

The assertion about the finiteness of entropy is well known to specialists. It follows, e.\,g., from Lemma~3.4 of~\cite{Fo14}. 
See also~\cite{HLT12}.

The assertion concerning the first moment is proved in~\cite[Lemma 2.3]{Ka91-solv} for the case of normal subgroup. The idea of the proof easily extends to the case of arbitrary subgroup of finite index: instead of the random walk on the finite factor-group, one should consider the Markov process on the set of cosets, with some additional minor modifications. The same argument works equally well for the case of first logarithmic moment.
(See also Theorem~4.5 in~\cite{Fo14}.)
\end{proof}

In all those cases where the fundamental group of a closed 3-manifold has non-trivial Poisson--Furstenberg boundary, the most general results obtained so far are due to V.~Kaimanovich and describe the boundary either for the case of measure with finite entropy and finite first logarithmic moment or for the less general case of measure with finite first moment. 
These types of measure are covered by Lemma~\ref{virtual}, and in the following analysis we will repeatedly use the transition to the subgroups of finite index.
In particular, by virtue of Lemma \ref{virtual}, we can restrict further exposition to the case of orientable manifolds.

Recall that a manifold is called {\it prime} if it can not be presented as a nontrivial connected sum. 
We have the following ad hoc classification lemma for orientable prime 3-manifolds.

\begin{Lem}\label{3-topology} If $M$ is a (closed connected) orientable prime 3-manifold, then $M$  is either 
\begin{enumerate} 
\item geometric, or 
\item admits a metric of non-positive sectional curvature, or 
\item is a graph-manifold. 
\end{enumerate}
 \end{Lem}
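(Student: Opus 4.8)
The plan is to feed $M$ into the geometric torus decomposition and to read off which alternative holds from the resulting list of pieces, using the geometrization theorem to recognise the individual pieces and a gluing theorem of B.~Leeb to handle the one case that is neither geometric nor a graph-manifold. Throughout I would work with the canonical (JSJ) torus decomposition, which exists for every irreducible orientable closed $3$-manifold; Lemma~\ref{geomgm} then identifies it with the geometric decomposition whenever it is nonempty (in that situation $M$ contains an incompressible torus and is therefore Haken, so the lemma applies).

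First I would clear away the reducible case. Among closed orientable \emph{prime} manifolds the only one that fails to be irreducible is $S^2\times S^1$, which carries the homogeneous geometry $S^2\times\R$ and thus already satisfies alternative~(1). Hence I may assume $M$ irreducible and pass to its canonical torus decomposition $\mathcal{G}_M$.

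The heart of the argument is a three-way case split. \emph{(i)} If $\mathcal{G}_M=\varnothing$, then $M$ is atoroidal, and the geometrization theorem makes $M$ geometric outright: such a manifold is either hyperbolic or Seifert fibered, and in either case it carries one of Thurston's eight geometries, so we are in alternative~(1). \emph{(ii)} If $\mathcal{G}_M\neq\varnothing$ but every component of $M\setminus\mathcal{G}_M$ is Seifert fibered, then, since a nonempty canonical decomposition prevents $M$ from being Seifert fibered itself, $M$ is by definition a graph-manifold, which is alternative~(3). \emph{(iii)} The remaining possibility is that $\mathcal{G}_M\neq\varnothing$ and at least one component is atoroidal with toric boundary, hence a finite-volume hyperbolic piece by geometrization.

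Case~\emph{(iii)} is where the real work lies, and I expect it to be the main obstacle. Here $M$ is simultaneously non-geometric and not a graph-manifold, so neither alternative~(1) nor~(3) can apply, and one must actually \emph{produce} a metric of nonpositive curvature in order to land in alternative~(2). I would invoke the theorem of B.~Leeb (Invent. Math.\ 1995) asserting that a closed Haken $3$-manifold whose geometric decomposition contains a hyperbolic component admits a Riemannian metric of nonpositive sectional curvature. The entire content of this step resides in Leeb's construction, which interpolates across the decomposition tori between the strictly negatively curved hyperbolic pieces and the flat directions of the Seifert ($\Hyp^2\times\R$) pieces while keeping the sectional curvature $\le 0$; by contrast, the other two cases are immediate consequences of geometrization together with the definitions of graph-manifold and of geometric manifold.
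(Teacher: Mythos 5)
Your proposal is correct and follows essentially the same route as the paper: dispose of the reducible prime case via $S^2\times S^1$, then apply the geometric decomposition to the irreducible case and split on whether it is empty (geometric), all Seifert (graph-manifold), or contains a hyperbolic piece (Leeb's nonpositive-curvature theorem \cite{L}). The only cosmetic differences are that the paper first separates the non-Haken irreducible case, handled directly by Perelman's geometrization, before invoking Thurston's decomposition theorem (Lemma~\ref{geomgm}) for the Haken case, and that an empty collection $\mathcal{G}_M$ gives ``geometric'' directly rather than via ``atoroidal'' (Sol torus bundles have empty geometric decomposition but are not atoroidal) --- neither point affects the conclusion.
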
 
Note that there are geometric manifolds and graph-manifolds that admit metrics of non-positive sectional curvature.

\begin{Proof} If $M$ is reducible (i.\,e., $M$ contains an
embedded 2-sphere that does not bound a ball) then it is homeomorphic to $S^2\times S^1$
and admits the geometric structure modelled on $S^2\times\mathbb{R}$. 

If $M$ is  non-Haken irreducible manifold, then it is geometric by the Geometrization conjecture proved by G.~Perelman.

If $M$ is a Haken irreducible manifold, then by Lemma \ref{geomgm}  $M$  is 
either geometric (empty collection $\mathcal{G}_M$) or
contains a non-empty collection  $\mathcal{G}_M$ of disjointly embedded incompressible tori and Klein bottles such that each
connected component of {${N\setminus\mathcal{G}_M}$} 
is a geometric manifold (with non-empty boundary).
So each component of $M\setminus \mathcal{G}_M$ is either 
Seifert fibered with geometry modelled on
$\mathbb{H}^2\times\mathbb{R}$ or hyperbolic \cite[\S~1.6]{AFW}.
 If $M\setminus\mathcal{G}_M$
contains only Seifert pieces, then it is a graph-manifold.
If $M\setminus\mathcal{G}_M$ contains a hyperbolic component,
then it admits a metric of non-positive sectional curvature \cite{L}. \end{Proof}

Lemma~\ref{3-topology} confirms the following classification of (closed connected) orientable 3-manifolds, see Figure \ref{diagram}.

\begin{figure}
\begin{center}
\includegraphics[viewport=350 -5 50 400,scale=0.55]{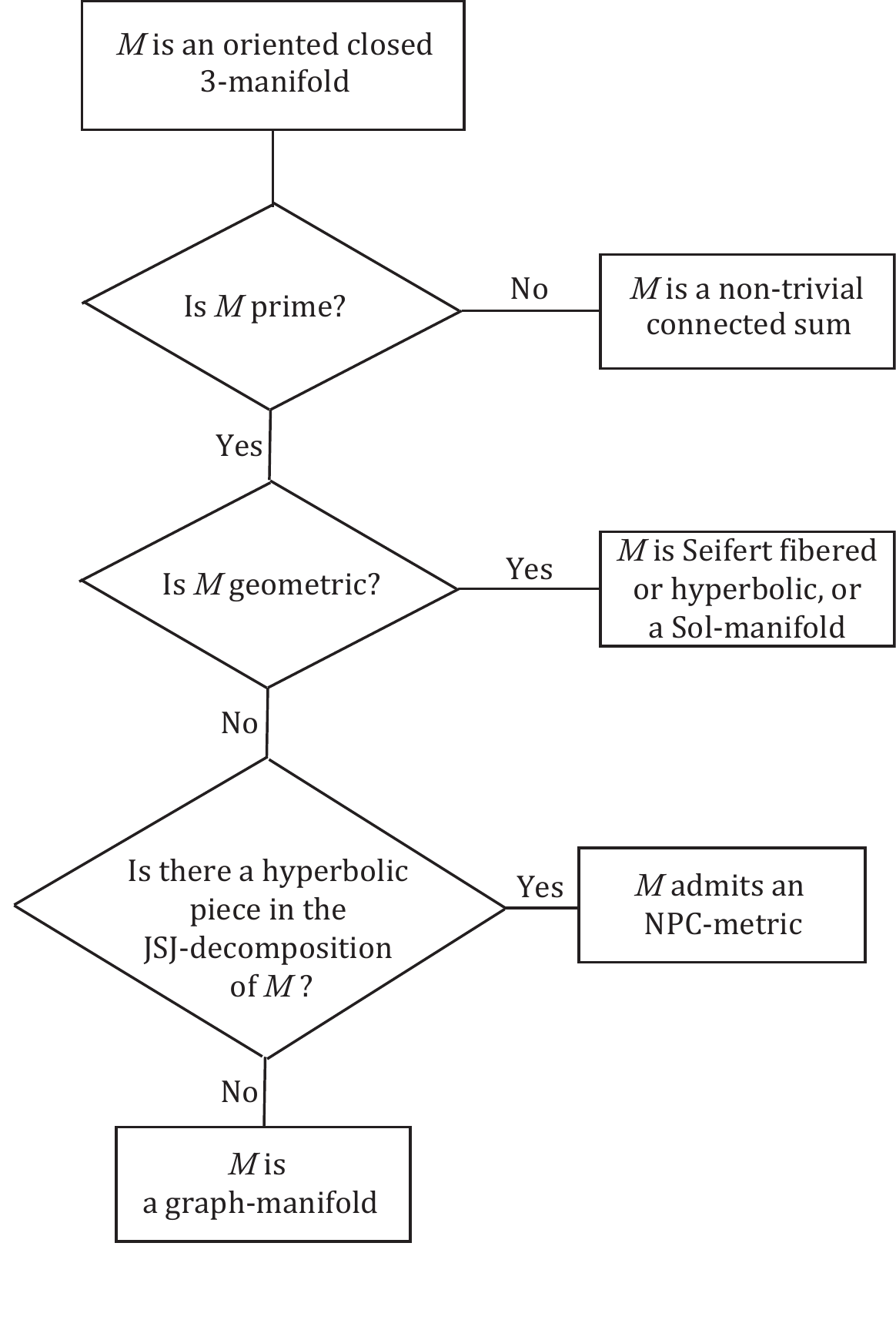}\\
\caption{Four types of 3-manifolds}\label{diagram}
\end{center}
\end{figure}

It can be mentioned that $\R P^3\# \R P^3$ is the only oriented non-prime geometric manifold, and that a geometric manifold which admits an NPC-metric has geometry modelled either on $\Hyp^2\times\R$ or on $\Hyp^3$. Note also that some graph-manifolds admit NPC-metrics~\cite{BK96}. There are no other intersections between terminal classes of the above diagram.

\subsection{The case of non-prime manifolds}
\label{sec:non-prime}
The fundamental group of a non-prime manifold is the free product of the fundamental groups of its prime summands. The 
fundamental groups of prime summands are non-trivial by the Poincare conjecture proved by G.~Perelman\footnote{Of~course, we can go without the conjecture: if some of the summands of a manifold $M$ were fake 3-spheres, 
one could remove them from $M$ to obtain a manifold $M'$ with the same fundamental group.}.
The free product of non-trivial finitely generated groups is either isomorphic to~$\Z/2\Z*\Z/2\Z$ or {\em has infinitely many ends} (see the Stallings theorem on ends of groups).
The group~$\Z/2\Z*\Z/2\Z$ is virtually Abelian.
Abelian groups have trivial Poisson--Furstenberg boundaries~\cite{Bl55}. It follows by Lemma~\ref{virtual} that virtually Abelian groups has trivial Poisson--Furstenberg boundaries. 
The Poisson--Furstenberg boundaries of finitely generated groups with infinitely many ends are described (for measures with finite entropy and finite first logarithmic moment)
in~\cite[Theorems~8.3,8.4]{Ka00-hyp}. See also earlier work \cite{Wo89}, where the case of finitely supported measure is considered. 
Note that the Stallings theorem on ends of groups implies that Poisson--Furstenberg boundaries of (finitely generated) groups with infinitely many ends can also be described (for measures with finite entropy and finite first logarithmic moment) via Theorem~\ref{Th0}.

\subsection{The cases of geometric manifolds}
Recall that a compact manifold is called {\it geometric} if its interior admits a complete locally homogeneous Riemannian metric of finite volume. 
A~geometric manifold $M$ is called an \emph{$X$-manifold} (or a \emph{manifold with geometry modelled on~$X$}) 
if there is a simply-connected Riemannian manifold~$X$ and a subgroup~$G$ of the isometry group $\operatorname{Isom}X$ such that $M=X/G$.
There are eight types of geometric manifolds in dimension~3. The six Seifert ones: 
$S^3$ (spherical), 
$\mathbb{E}^3$ (euclidean), 
$S^2\times\R$, 
$\Hyp^2\times\R$ (both with the product metric), 
$\widetilde{\mathrm{SL}_2\R}$ (with a left-invariant metric) and $\rm{Nil}$ (the Lie group of upper unitriangular $3\times 3$-matrices with a left-invariant metric). 
The two non-Seifert ones: 
$\mathrm{Sol}$ (the Lie group of isometries of 2-dimensional Minkowski space with a left-invariant metric) and $\Hyp^3$ (hyperbolic). 
For details see, e.g., the survey \cite{Sc}.

The only non-prime geometric manifold is $\R P^3\#\R P^3$ with fundamental group $\Z/2\Z*\Z/2\Z$; 
it admits geometry modelled on $S^2\times\R$. 

\subsubsection{The case of $S^3$-manifolds}\label{sec:finite}
Each $S^3$-manifold is finitely covered by $S^3$ (the Hopf $S^1$-fibration).
Therefore the fundamental group of every $S^3$-manifold is finite. 
Since the trivial group has trivial Poisson--Furstenberg boundary by definition, it follows by Lemma \ref{virtual} that each finite group
has trivial Poisson--Furstenberg boundary. 
 
Note that by the Elliptisation conjecture (which follows from G.~Perelman results) each 3-manifold with a finite fundamental group is Seifert fibered and admits the geometry modelled on $S^3$.

\subsubsection{The case of $\mathbb{E}^3$- and $S^2\times\R$-manifolds}\label{virtabel}
Each $\mathbb{E}^3$- or $S^2\times\R$-manifold is finitely covered by a trivial $S^1$-fibration ($T^3$ in the first case, and $S^2\times S^1$ in the second one) \cite[Table~1]{AFW}.
Therefore the fundamental groups of such manifolds are virtually Abelian: they contain Abelian groups as finite index subgroups ($\Z^3$~in the $\mathbb{E}^3$-case and $\Z$ in the $S^2\times\R$-case). Abelian groups have trivial Poisson--Furstenberg boundaries \cite{Bl55}. It follows by Lemma~\ref{virtual} that each virtually Abelian group has trivial Poisson--Furstenberg boundary. 

\subsubsection{The case of Nil-manifolds}\label{sec:nil}
Each Nil-manifold $M$ is finitely covered by an $S^1$-bundle over the torus $T^2$ (and does not covered by the 3-torus);
alternatively: $M$ is finitely covered by a torus-bundle $N\simeq T^2\times_{f^n} S^1$ over the circle (the mapping torus of a power of a Dehn twist $f\colon T^2\to T^2$) \cite[Table~1]{AFW}.
This implies that $\pi_1(N)$ has a presentation
$$
\langle a,b,c\,|\,ab=ba,\,cac^{-1}=a,\,cbc^{-1}=ba^n\rangle.
$$
Obviously the commutant of $\pi_1(N)$ is generated by $a^n$ and lies in the center of $\pi_1(N)$.
Therefore the fundamental group of every Nil-manifold is virtually nilpotent. 
Nilpotent groups have trivial Poisson--Furstenberg boundaries~\cite{DM},~\cite{Mar}.
It follows by Lemma~\ref{virtual} that each virtually nilpotent group has trivial Poisson--Furstenberg boundary.

\subsubsection{The case of Sol-manifolds}\label{sec:sol}
Each Sol-manifold $M$ is finitely covered (with degree $\le 2$) by a torus-bundle $N\simeq T^2\times_f S^1$ over the circle (the mapping torus of a pseudo-Anosov diffeomorphism $f\colon T^2\to T^2$) \cite[Table~1]{AFW}.
This implies that $\pi_1(N)$ has a presentation
$$
\langle a,b,c\,|\,ab=ba,\,cac^{-1}=a^kb^l,\,cbc^{-1}=a^mb^n\rangle,\quad kn-ml=1,\,|k+n|>2.
$$
Therefore the fundamental group of every Sol-manifold contains a subgroup of finite index $\le 2$ which is a polycyclic group $\Z^2\rtimes\Z$, where $\Z$ acts on $\Z^2$ by hyperbolic automorphisms.
The Poisson--Furstenberg boundaries of such polycyclic groups are described (for measures with finite first moment) in \cite[Theorem~1]{Ka95-pol}.

\subsubsection{The case of $\Hyp^2\times\R$-manifolds}\label{sec:h^2times r}
Each $\Hyp^2\times\R$-manifold $M$ is finitely covered by a manifold of the form $F\times S^1$, where $F$ is a compact hyperbolic surface \cite[Table 1]{AFW}.
Therefore the fundamental group $\pi_1(M)$ contains the product $\pi_1(F)\times \Z$ as a subgroup of finite index.
The product $\pi_1(F)\times \Z$ can be treated as a central extension of~$\pi_1(F)$. The case of such groups is discussed in the following section.

\subsubsection{The case of $\widetilde{\mathrm{SL}_2\R}$-manifolds}\label{sec:sl2r}
Each $\widetilde{\mathrm{SL}_2\R}$-manifold $M$ is finitely covered by the unit circle bundle\footnote{If $X$ is a Riemannian manifold then the unit sphere bundle $UX$ is defined as a subbundle of the tangent bundle~$TX$: $$UX=\{(x,v)\in TX: |v|=1\}.$$} $UF$ over a closed hyperbolic surface~$F$ \cite[Table~1]{AFW}.
In particular, $\pi_1(UF)$ is a $\Z$-extension of $\pi_1(F)$: we have an exact sequence
$$
0\to\Z\to\pi_1(UF)\to\pi_1(F)\to 1.
$$
It turns out that this extension is central (see, e.\,g., \cite[\S~4, Sec.~``The geometry of $\widetilde{\mathrm{SL}_2\R}$'']{Sc}).
Thus, the fundamental group $\pi_1(M)$ contains a central extension of $\pi_1(F)$ by $\Z$ as a subgroup of finite index. 
By \cite[Theorems~2.1.5,~5.2.7]{Ka95-cov}, the Poisson--Furstenberg boundary of a pair $(G,\mu_0)$, where $G$ is a central extension of a group~$H$, coincides with the Poisson--Furstenberg boundary of the pair $(H, \pi_*(\mu_0))$, where $\pi_*(\mu_0)$ is the image of $\mu_0$ under the projection $\pi\colon G\to H$ (we copy the argument from~\cite{FM}). 
Clearly, $\pi_*$ sends measures with finite first moment (resp., finite entropy, finite first logarithmic moment) to measures with finite first moment (resp., finite entropy, finite first logarithmic moment). 

Thus, by the preceding argument and Lemma~\ref{virtual}, the Poisson--Furstenberg boundary of $(\pi_1(M),\mu)$, where $\mu$ is a measure with finite entropy and finite first logarithmic moment, is isomorphic to the Poisson--Furstenberg boundary of $(\pi_1(F),\nu_\mu)$, where $\nu_\mu$ is determined by $\mu$ and has finite entropy and finite first logarithmic moment.

One can treat $\pi_1(F)$ as a hyperbolic group, as a discrete subgroup of the Lie group ${\rm SL}_2\mathbb{R}$, as an amalgam, or as an HNN-extension. Poisson--Furstenberg boundaries of hyperbolic groups for the case of measures with finite entropy and finite first logarithmic moment are described in~\cite[Sec.~7]{Ka00-hyp}. See also~\cite{An90}. 
For descriptions of Poisson--Furstenberg boundaries for discrete subgroups of semi-simple Lie groups see \cite[Sec.~10]{Ka00-hyp}, \cite{Ka85}, \cite{Fur71}, \cite{Gui80},~\cite{Le85}.
Finally, the group $\pi_1(F)$ can be presented as an amalgam or an HNN-extension such that the action on the corresponding Bass--Serre covering tree satisfies the prerequisites of Theorem~\ref{Th0}, which thus yields another description for the Poisson--Furstenberg boundaries of $\pi_1(F)$ and $\pi_1(M)$.

\subsubsection{The case of $\Hyp^3$-manifolds}\label{sec:hyp}
The fundamental groups of $\Hyp^3$-manifolds can be treated as hyperbolic groups or as discrete cocompact subgroups of the Lie group ${\rm Isom}\,\mathbb{H}^3\simeq {\rm PSL}_2\mathbb{C}$ (see Sec.~\ref{sec:sl2r} and also Sec.~\ref{sec:npc}).

Note that each $\Hyp^3$-manifold is finitely covered by an $F$-bundle $F\times_f S^1$ over the circle (the mapping torus of a pseudo-Anosov diffeomorphism $f\colon F\to F$, $F$ is a compact hyperbolic surface)\footnote{We do not need the assertion and mention it here for plenitude of exposition. The assertion was one of Thurston conjectures. It was proved in~\cite{AFW}.}.

\subsection{The case of non-positive sectional curvature}\label{sec:npc}
The universal covering space $N$ of a closed manifold $M$ with a metric of non-positive sectional curvature is a Cartan-Hadamard manifold. The fundamental group $\pi_1(M)$ viewed as the deck transformation group of $N$ is a cocompact group of isometries of~$N$. 
Poisson--Furstenberg boundaries of such groups are described in \cite{Ka00-hyp} (see Remark~2 in Sec.~9). See also \cite{KM}, \cite{BL}.


\begin{thebibliography}{AFW13}

\bibitem[An90]{An90}
A. Ancona, 
\emph{Th\'eorie du potentiel sur les graphes et les vari\'et\'es\/}, \'Ecole d'\'et\'e de Probabilit\'es de Saint-Flour XVIII---1988, Lecture Notes in Math., vol.~1427, Springer, Berlin, 1990, pp. 1--112.

\bibitem[AFW13]{AFW}
M. Aschenbrenner, S. Friedl, H. Wilton,
\emph{3-manifold groups},
arXiv:1205.0202v3 [math.GT] (2013).


\bibitem[BL94]{BL}
W.~Ballmann, F.~Ledrappier,
\emph{The Poisson boundary for rank one manifolds and their cocompact lattices\/}, Forum Math. {\bf 6} (1994), 301--313.

\bibitem[Bl55]{Bl55}
D.~Blackwell, 
\emph{On transient Markov processes with a countable number of states and stationary transition probabilities\/}, 
Ann. Math. Statist. {\bf 26} (1955), 654--658.

\bibitem[Bo99]{Bo99}
B.\,H.~Bowditch,
\emph{Treelike structures arising from continua and convergence groups\/},
Mem. Amer. Math. Soc. {\bf139} (1999), no. 662.

\bibitem[BK96]{BK96}
S.V.~Buyalo, V.~Kobel'skii,
\emph{Geometrization of graph-manifolds. I. Conformal geometrization}, St.~Petersburg Math. J. {\bf 7} (1996),  185--216. 

\bibitem[BK00]{BK00}
S.V.~Buyalo, V.~Kobel'skii,
\emph{Generalized graph-manifolds of non-positive curvature}, St. Petersburg Math. J.  {\bf 11} (2000), 251--268.	

\bibitem[BS05]{BS}
S.V.~Buyalo, P.V.~Svetlov
\emph{Topological and geometric properties of graph-manifolds\/},
St. Petersburg Math. J. {\bf 16} (2005), 297--340.

\bibitem[CS89]{CS89}
D.I.~Cartwright, P.M.~Soardi,
\emph{Convergence to ends for random walks on the automorphism group of a tree\/},
Proc. Amer. Math. Soc. {\bf107} (1989), 817--823.

\bibitem[CSW93]{CSW93}
D.\,I.~Cartwright, P.\,M.~Soardi, W.~Woess,
\emph{Martin and end compactifications for nonlocally finite graphs\/}, 
Trans. Amer. Math. Soc. {\bf338} (1993), 679--693.


\bibitem[CHL07]{CHL07}
T.~Coulbois, A.~Hilion, and M.~Lustig, 
\emph{Non-unique ergodicity, observers' topology and the dual algebraic lamination for R-trees\/}, 
Illinois J.~Math. {\bf51}:3 (2007), 897--911. 

\bibitem[DM61]{DM}
E.B.~Dynkin, M.B.~Malyutov,
\emph{Random walks on groups with a finite number of generators\/},
Dokl. Akad. Nauk SSSR {\bf137} (1961), 1042--1045. 

\bibitem[FM98]{FM} 
B.~Farb, H.~Masur, 
\emph{Superrigidity and mapping class groups\/}, 
Topology {\bf 37} (1998), no. 6, 1169--1176.

\bibitem[FJ04]{FJ04}
C.~Favre, M.~Jonsson, 
\emph{The valuative tree}, 
Lecture Notes in Math., vol.~1853, 
Springer-Verlag, Berlin, 2004.

\bibitem[Fo14]{Fo14}
B.~Forghani,
\emph{Entropy of random walks on groups via Markov times\/}, preprint, 2014.

\bibitem[Fur63]{Fur63}
H.~Furstenberg, 
\emph{Non-commuting random products\/}, 
Trans. Amer. Math. Soc. {\bf 108} (1963), 377--428.

\bibitem[Fur71]{Fur71}
H.~Furstenberg,
\emph{Random walks and discrete subgroups of Lie groups\/},
Advances Probab. Related Topics, Vol. {\bf 1}, Dekker, New York, 1971, 1--63.

\bibitem[Gr81]{Gr81}
M.~Gromov, 
\emph{Hyperbolic manifolds, groups and actions\/}, Riemann Surfaces and Related Topics (I.~Kra and B.~Maskit, eds.), Proceedings, Stony Brook 1978, Annals of Math. Studies, vol.~97, Princeton University, 1981, pp. 83--213.

\bibitem[Gui80]{Gui80}
Y.~Guivarc'h, 
\emph{Quelques propri\'et\'es asymptotiques des produits de matrices al\'eatoires\/},
\'Ecole d'\'et\'e de Probabilit\'es de Saint-Flour VIII---1978,
Lecture Notes in Math., vol.~774, Springer, Berlin, 1980, pp. 177--250.

\bibitem[HLT12]{HLT12}
Y.~Hartman, Y.~Lima, O.~Tamuz,
\emph{An Abramov formula for stationary spaces of discrete groups\/},
arXiv:1204.5414 [math.DS] (2012).

\bibitem[KV83]{KV83}
V.A.~Kaimanovich, A.M. Vershik,
\emph{Random walks on discrete groups: boundary and entropy\/}, Ann. Probab. {\bf 11} (1983), 457--490.

\bibitem[Kai85]{Ka85}
V.A.~Kaimanovich, 
\emph{An entropy criterion for maximality of the boundary of random walks on discrete groups\/}, Soviet Math. Dokl. {\bf 31} (1985), 193--197.

\bibitem[Kai91]{Ka91-solv}
V.A.~Kaimanovich,
\emph{Poisson boundaries of random walks on discrete solvable groups\/},
Heyer, Herbert (ed.), Probability measures on groups X. Proceedings of the tenth Oberwolfach conference, held November 4--10, 1990 in Oberwolfach, Germany. New York, NY: Plenum Publishing Corporation, 1991, 205--238.

\bibitem[Kai95a]{Ka95-cov}
V.A.~Kaimanovich,
\emph{The Poisson boundary of covering Markov operators\/}, Israel J. Math. {\bf 89} (1995), 77--134.

\bibitem[Kai95b]{Ka95-pol}
V.A. Kaimanovich,
\emph{The Poisson boundary of polycyclic groups\/},
Probability measures on groups and related structures, XI, Oberwolfach, 1994 (H. Heyer ed.), World Sci. Publishing, River Edge, NJ, 1995, 182--195.

\bibitem[Kai00]{Ka00-hyp}
V.A.~Kaimanovich,
\emph{The Poisson formula for groups with hyperbolic properties\/},
Annals of Math. {\bf 152}, No.3 (2000), 659--692.

\bibitem[Kap00]{K}
M.~Kapovich, 
\emph{Hyperbolic manifolds and discrete groups\/}, Progress in Mathematics, vol.~183, Birkh\"{a}user Boston Inc., Boston, MA, 2001.


\bibitem[KM99]{KM}
A. Karlsson, G.\,A. Margulis, 
\emph{A multiplicative ergodic theorem and nonpos- itively curved spaces}, Comm. Math. Phys. {\bf{208}} (1999), 107--123.

\bibitem[Led85]{Le85} 
F. Ledrappier, 
\emph{Poisson boundaries of discrete groups of matrices\/}, Israel J. Math. {\bf 50} (1985), 319--336.

\bibitem[Leeb95]{L} B.~Leeb,
\emph{3-manifolds with(out) metrics of nonpositive curvature\/},
Invent. math.,
Vol. {\bf 122} (1995), 277--289.

\bibitem[Mal14]{Mal14}
A.V.~Malyutin,
\emph{Random walks on groups acting on trees\/}, in preparation.

\bibitem[Mal12]{Mal12}
A.V.~Malyutin,
\emph{Pretrees and the shadow topology\/},
PDMI preprint, 2012 (in Russian). 

\bibitem[Mar66]{Mar}
G.A.~Margulis,
\emph{Positive harmonic functions on nilpotent groups\/},
Dokl. Akad. Nauk SSSR {\bf166} (1966), 1054--1057.

\bibitem[MSh04]{MSh04}
N.~Monod, Y.~Shalom, 
\emph{Cocycle superrigidity and bounded cohomology for negatively curved spaces\/}, 
J.~Differential Geom. {\bf67}:3 (2004), 395--455.

\bibitem[Ni89]{Ni89}
J.~Nikiel, 
\emph{Topologies on pseudo-trees and applications\/}, 
Mem. Amer. Math. Soc. {\bf82} (1989), no.~416.

\bibitem[Sc83]{Sc}
P. Scott,
\emph{The geometries of 3-manifolds\/}, 
Bull. London Math. Soc. {\bf 15} (1983), 401--487.


\bibitem[Sw99]{Sw99}
E.\,L.~Swenson,
\emph{A cut point theorem for {\rm CAT(0)} groups\/}, J. Differential Geom. {\bf53}:2 (1999), 327--358.

\bibitem[Wa75]{Wa75}
L.\,E.~Ward, Jr., 
\emph{Recent developments in dendritic spaces and related topics\/},
Studies in Topology (Proc. Conf., Univ.~North Carolina, Charlotte, N.C., 1974), Acad.~Press, New York, 1975, pp. 601--647.

\bibitem[Wa80]{Wa80}
L.\,E.~Ward, Jr., 
\emph{Axioms for cutpoints\/}, 
General Topology and Modern Analysis (Proc. Conf., Univ.~California, Riverside, Calif., 1980), 
Acad.~Press, New York, 1981, pp. 327--336.

\bibitem[Wo89]{Wo89} 
W. Woess, 
\emph{Boundaries of random walks on graphs and groups with infinitely many ends\/}, 
Israel J. Math. {\bf 68} (1989), 271--301.


\end{thebibliography}
\end{document}